\newtheorem{theorem}{Theorem}[section]
\newtheorem{lemma}{Lemma}[section]
\newtheorem{proposition}{Proposition}[section]
\newtheorem{corollary}{Corollary}[section]
\theoremstyle{definition}
\newtheorem{definition}{Definition}[section]
\newtheorem{remark}{Remark}[section]
\newtheorem{example}{Example}[section]
\newtheorem{question*}{Question}
\author{Colin Defant\affiliationmark{1}}
\title[Postorder Preimages]{Postorder Preimages}
\affiliation{University of Florida}
\keywords{Tree traversal; decreasing plane tree; postorder; permutation; stack-sorting}
\begin{document}
\publicationdetails{19}{2017}{1}{3}{1428}
\maketitle

\begin{abstract}
Given a set $Y$ of decreasing plane trees and a permutation $\pi$, how many trees in $Y$ have $\pi$ as their postorder? Using combinatorial and geometric constructions, we provide a method for answering this question for certain sets $Y$ and all permutations $\pi$. We then provide applications of our results to the study of the deterministic stack-sorting algorithm. 
\end{abstract}

\section{Introduction}
If $X$ is a finite set of positive integers, then a \emph{decreasing plane tree on $X$} is a rooted plane tree with vertex set $X$ in which each nonroot vertex is smaller than its parent. Decreasing plane trees play a significant role in computer science. In that field, one is often interested in listing the vertices of the tree in some meaningful order. A scheme by which one reads these vertices is a \emph{tree traversal}. Two of the most common tree traversals are known as \emph{postorder} and \emph{preorder}; both are defined recursively. 

To read a decreasing binary plane tree (``binary" means that each vertex has at most two children) in postorder, we first read the left subtree of the root in postorder. We then proceed to read the right subtree in postorder before finally reading the root. This postorder traversal easily generalizes to any decreasing plane tree. Namely, we read the subtrees of the root in postorder from left to right before finally reading the root. As an example, the postorder of each tree in Figure \ref{Fig1} is $127358$. To read a tree in preorder, we read the root first, and then proceed to read the subtrees of the root from left to right in preorder. Alternatively, one can find the preorder reading of a tree by first reflecting the tree through a vertical axis and then taking the reverse of the postorder reading of the resulting tree. Because of this simple connection between the preorder and postorder, we will concern ourselves primarily with the postorder traversal; analogous results for the preorder traversal will follow trivially. 

Tree traversals have been incredibly useful tools in combinatorics and computer science. For example, the postorder reading has been instrumental in the study of the deterministic stack-sorting algorithm \cite{Albert14, Bona, Bona02, Bona03, Bousquet00, West90}. In fact, the study of this stack-sorting algorithm was the original motivation for developing the results in this article. However, there has been surprisingly little research concerning the combinatorics of the tree traversals themselves. In this article, we investigate the following very natural question. 

\begin{question*}\label{Quest1}
If $Y$ is a set of decreasing plane trees and $\pi$ is a permutation, then how many trees in $Y$ have postorder $\pi$? 
\end{question*}

Before proceeding, let us establish some terminology. We consider two major types of unlabeled plane trees. The first, which we call a $d$-ary plane tree, is either an empty tree or a root along with a $d$-tuple of $d$-ary plane trees. Therefore, the ordinary generating function $A_d(x)$ for $d$-ary plane trees satisfies the equation \[A_d(x)=1+xA_d(x)^d.\] The second type depends on a set $S$ of nonnegative integers with $0\in S$. Each of these trees, which we call $S$-trees, consists of a root along with a $j$-tuple of $S$-trees for some $j\in S$. When $S=\{0,1,2\}$, $S$-trees are commonly known as unary-binary trees. The ordinary generating function $B_S(x)$ for $S$-trees satisfies the functional equation \[B_S(x)=x\sum_{j\in S}B_S(x)^j.\] 

If $X$ is a finite set of positive integers, then a \emph{decreasing $d$-ary plane tree on} $X$ is a $d$-ary plane tree whose vertices are labeled with the elements of $X$ so that the label of any nonroot vertex is smaller than the label of its parent (distinct vertices are given distinct labels, and all elements of $X$ are used as labels). Similarly, a \emph{decreasing $S$-tree on} $X$ is an $S$-tree whose vertices are labeled with the elements of $X$ so that the label of any nonroot vertex is smaller than the label of its parent. In both types of trees, we will speak of the $h^\text{th}$ child of a vertex. This is simply the $h^\text{th}$ child from the left.  

\begin{example}\label{Exam1}
In this example let $S=\{0,1,2,3,4\}$. Figure \ref{Fig1} shows two decreasing plane trees. As decreasing $S$-trees, they are identical. In both trees, the first child of $7$ is $1$ and the second child of $7$ is $2$. However, if we instead view the trees as decreasing $4$-ary trees, then they are distinct. In this case, in the tree on the right, the first child of $7$ is $1$, the fourth child of $7$ is $2$, and the second and third children of $7$ are empty. This contrasts the situation in the tree on the left, in which the first and second children of $7$ are $1$ and $2$ while the third and fourth children of $7$ are empty. 

\begin{figure}[t]
\begin{center} 
\includegraphics[height=2.5cm]{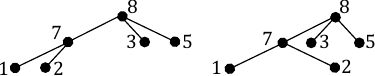}
\end{center}
\captionof{figure}{This figure depicts two decreasing plane trees on $X=\{1,2,3,5,7,8\}$. They are the same as decreasing $\{0,1,2,3,4\}$-trees, but they are different as decreasing $4$-ary trees.} \label{Fig1}
\end{figure}
\end{example}

We will give a bijective method for attacking Question \ref{Quest1} when $Y$ is the collection of decreasing $\mathbb N$-trees. We then show how our method applies to general decreasing $S$-trees and decreasing $d$-ary plane trees with only slight modifications. In the end, we apply a special case of our results to gain new information about the deterministic stack-sorting algorithm. 

A \emph{descent} of a permutation $\pi=\pi_1\pi_2\cdots\pi_n$ is an index $i$ such that $\pi_i>\pi_{i+1}$ (we do not consider $n$ to be a descent). A \emph{descent top} is simply an entry in a descent position. That is, a descent top is an entry $\pi_i$ such that $\pi_i>\pi_{i+1}$. We denote by $P(\tau)$ the postorder reading of a decreasing plane tree $\tau$. We convene to let $\mathbb N=\{0,1,2,\ldots\}$ and $[m]=\{1,2,\ldots,m\}$.

\section{Valid Hook Configurations} 

Our method relies on a geometric construction of objects that we call \emph{valid hook configurations}. The purpose of this section is to describe this construction. 

Given a permutation $\pi=\pi_1\pi_2\ldots\pi_n\in S_n$, we obtain a standard diagram for $\pi$ by plotting the points of the form $(i,\pi_i)$ in the plane. A \emph{hook} in this diagram is the union of two line segments. One is a vertical line segment connecting a point $(i,\pi_i)$ to a point $(i,\pi_j)$, where $i<j$ and $\pi_i<\pi_j$. The second is a horizontal line segment connecting the points $(i,\pi_j)$ and $(j,\pi_j)$. One can think of drawing a hook by starting at the point $(i,\pi_i)$, moving upward, and then turning right to meet with the point $(j,\pi_j)$. The point $(i,\pi_i)$ is the \emph{southwest endpoint} of the hook while $(j,\pi_j)$ is the \emph{northeast endpoint} of the hook. We let ${}_eH$ and $H^e$ denote the southwest and northeast endpoints, respectively, of the hook $H$. 

In a valid hook configuration of $\pi$, each entry can be the southwest endpoint of at most one hook, but an entry can be the northeast endpoint of several hooks. If $i$ is a descent of $\pi$, then we require $(i,\pi_i)$ to be the southwest endpoint of a hook. On the other hand, if $(j,\pi_j)$ is a northeast endpoint of some hook, then at least one of the hooks with northeast endpoint $(j,\pi_j)$ must have a southwest endpoint $(k,\pi_k)$ for some descent $k$ of $\pi$. We also insist that if $(j,\pi_j)$ is a northeast endpoint of some hook, then there is some hook whose northeast endpoint is $(j,\pi_j)$ and whose southwest endpoint is $(j-1,\pi_{j-1})$. Finally, in order for a configuration of hooks to be valid, we impose a restriction on the projections of hooks onto the $x$-axis. Let $H$ and $H'$ be hooks, and let $I$ and $I'$ be their respective projections onto the $x$-axis (which are intervals). Let ${}_eH=(i,\pi_i)$, $H^e=(j,\pi_j)$, ${}_eH'=(i',\pi_{i'})$, and ${H'}^e=(j',\pi_{j'})$. If $I\cap I'$ contains more than one point and $\pi_j\leq\pi_{j'}$, then we require $I\subseteq I'$. Any configuration of hooks that satisfies these criteria is valid. Valid hook configurations are of fundamental importance in the development of our results, so we will make their definition formal. 

\begin{definition}\label{Def1}
Let $\pi\in S_n$. We say that an $m$-tuple $\mathscr H=(H_1,H_2,\ldots,H_m)$ is a \emph{valid hook configuration of} $\pi$ if $H_1,H_2,\ldots,H_m$ are hooks in the diagram of $\pi$ that satisfy the following properties. 
\begin{enumerate}[(a)]
\item If ${}_eH_\ell=(i_\ell,\pi_{i_\ell})$ for each $\ell\in[m]$, then $i_1<i_2<\cdots<i_m$. 
\item If $i$ is a descent of $\pi$, then $(i,\pi_i)={}_eH_\ell$ for some $\ell\in[m]$. 
\item If $(j,\pi_j)=H_\ell^e$ for some $\ell\in[m]$, then there exist $\ell',\ell''\in[m]$ such that the $x$-coordinate of ${}_eH_{\ell'}$ is a descent of $\pi$, $(j-1,\pi_{j-1})={}_eH_{\ell''}$, and $H_{\ell'}^e=H_{\ell''}^e=(j,\pi_j)$. 
\item If $\ell,\ell'\in [m]$, ${}_eH_\ell=(i,\pi_i)$, $H_\ell^e=(j,\pi_j)$, ${}_eH_{\ell'}=(i',\pi_{i'})$, $H_{\ell'}^e=(j',\pi_{j'})$, $\pi_j\leq\pi_{j'}$, and $\left|[i,j]\cap[i',j']\right|>1$, then $[i,j]\subseteq[i',j']$. 
\end{enumerate}
Let $SW(\mathscr H)=\{{}_eH_1,{}_eH_2,\ldots,{}_eH_m\}$ and $NE(\mathscr H)=\{H_1^e,H_2^e,\ldots,H_m^e\}$. Let $\mathcal H(\pi)$ denote the set of valid hook configurations of $\pi$.  
\end{definition}

Let us take a moment to briefly explain the necessity of some of the technical notions introduced in Definition \ref{Def1}. We are going to use the valid hook configurations of a permutation $\pi$ to construct trees whose postorders are $\pi$. In such a tree, the entries $\pi_j$ such that $(j,\pi_j)\in NE(\mathscr H)$ will be the parents of the descent tops of $\pi$. Furthermore, the hooks will become edges in the tree. More precisely, if $(j,\pi_j)\in NE(\mathscr H)$, then $\pi_i$ will be a child of $\pi_j$ if and only if $(i,\pi_i)$ is the southwest endpoint of some hook whose northeast endpoint is $(j,\pi_j)$. We require property (b) for this reason. This is also why property (c) guarantees that each element of $NE(\mathscr H)$ is the northeast endpoint of some hook whose southwest endpoint is $(i,\pi_i)$ for some descent $i$. Moreover, this explains why each point can be the southwest endpoint of at most one hook while it can be the northeast endpoint of multiple hooks (a node in a tree can have at most one parent, but it can have multiple children). Property (c) also states that if $(j,\pi_j)$ is a northeast endpoint of a hook, then it must be a northeast endpoint of some hook whose southwest endpoint is $(j-1,\pi_{j-1})$. Indeed, if $(j,\pi_j)$ is a northeast endpoint of some hook, then $\pi_j$ will not be a leaf in the tree we construct. Since $\pi$ is the postorder of this tree, this implies that the entry $\pi_{j-1}$ must be the rightmost child of $\pi_j$. Hence, $(j-1,\pi_{j-1})$ and $(j,\pi_j)$ must be connected by a hook.   

There are some immediate consequences of the criteria in the above definition that are useful to keep in mind. First, the only way that two hooks can intersect in exactly one point is if that point is the northeast endpoint of one of the hooks and the southwest endpoint of the other. Also, no entry in the diagram can lie above a hook. More formally, if $H_\ell$ is a hook with ${}_eH_\ell=(i,\pi_i)$ and $H_\ell^e=(j,\pi_j)$, then $\pi_k<\pi_j$ for all $k\in\{i+1,i+2,\ldots,j-1\}$. Indeed, suppose instead that $\pi_k>\pi_j$ for some $k\in\{i+1,i+2,\ldots,j-1\}$. Then $\pi$ must have a descent $i'\in\{k,k+1,\ldots,j-1\}$ such that $\pi_{i'}>\pi_j$. According to criterion (b) in the above definition, $(i',\pi_{i'})={}_eH_{\ell'}$ for some hook $H_{\ell'}$. Let $H_{\ell'}^e=(j',\pi_{j'})$. Since $\pi_j<\pi_{i'}<\pi_{j'}$ and $[i',i'+1]\subseteq[i,j]\cap [i',j']$, condition (d) in the above definition states that we must have $[i,j]\subseteq [i',j']$. However, this is impossible because $i<i'$. 

After drawing a diagram of a permutation $\pi$ with a valid hook configuration $\mathscr H=(H_1,H_2,\ldots,$ $H_m)\in\mathcal H(\pi)$, we can color the diagram with $m+1$ colors $c_0,c_1,\ldots,c_m$ as follows. First, if ${}_eH_\ell=(i,\pi_i)$ and $H_\ell^e=(j,\pi_j)$, then we refer to the line segment connecting the points $(i+1/2,\pi_j)$ and $(j,\pi_j)$ as the \emph{top part} of the hook $H_\ell$. Assign $H_\ell$ the color $c_\ell$ for each $\ell\in[m]$. Color each point $(k,\pi_k)$ as follows. Start at $(k,\pi_k)$, and move directly upward until hitting the top part of a hook. Color $(k,\pi_k)$ the same color as the hook that you hit. If you hit multiple hooks at once, use the color of the hook that was hit whose southwest endpoint if farthest to the right. If you do not hit the top part of any hook, give $(k,\pi_k)$ the color $c_0$. Note that if $(k,\pi_k)={}_eH_\ell$ for some $\ell\in [m]$, then we ignore the hook $H_\ell$ while moving upward from $(k,\pi_k)$ to find the hook that is to lend its color to $(k,\pi_k)$. Moreover, if $(k,\pi_k)\in NE(\mathscr H)$, then we give $(k,\pi_k)$ the color $c_r$, where $r$ is the largest element of $[m]$ such that $(k,\pi_k)=H_r^e$. 

\begin{example}\label{Exam2}
Figure \ref{Fig2} depicts the colored diagram obtained from a valid hook configuration of the permutation $\pi=2.7.3.5.9.10.11.4.8.1.6.12.13.14.15.16$.  

\begin{figure}[t]
\begin{center} 
\includegraphics[height=7.0cm]{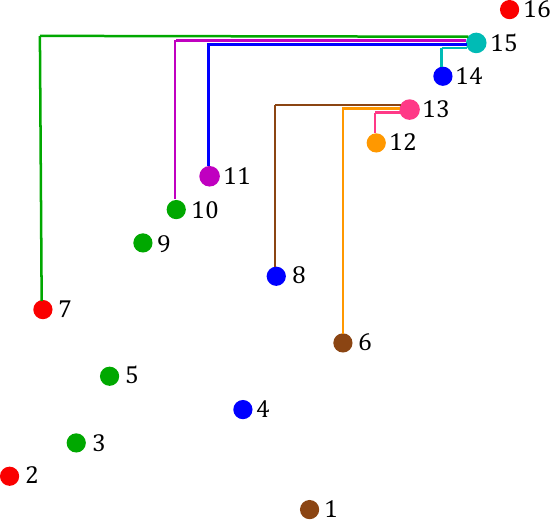}
\end{center}
\captionof{figure}{The colored diagram arising from a valid hook configuration.} \label{Fig2}
\end{figure}
\end{example}

\begin{remark}\label{Rem2}
We convene to say that the identity permutation $123\cdots n$ has a single empty valid hook configuration that induces a colored diagram in which every point is given the color $c_0$. 
\end{remark}

If $\pi\in S_n$, then each valid hook configuration $\mathscr H=(H_1,H_2,\ldots,H_m)\in\mathcal H(\pi)$ partitions $[n]$ into color classes. Let $Q_t(\mathscr H)$ be the set of entries $\pi_\ell$ such that  $(\ell,\pi_\ell)$ is assigned the color $c_t$. If we let $q_t(\mathscr H)=\vert Q_t(\mathscr H)\vert$, then $(q_0(\mathscr H),q_1(\mathscr H),\ldots,q_m(\mathscr H))$ is a composition of $n$ into $m+1$ parts. Let $\vert \mathscr H\vert=m$ denote the number of hooks in the valid hook configuration $\mathscr H$. In the next section, we show that the number of decreasing $\mathbb N$-trees whose postorder is $\pi$ is given by 
\begin{equation}\label{Eq1}
\sum_{\mathscr H\in\mathcal H(\pi)}\prod_{t=0}^{\vert \mathscr H\vert}C_{q_t(\mathscr H)-1},
\end{equation} where $C_i$ is the $i^\text{th}$ Catalan number. In Section 4, we refine this enumeration and provide analogous results aimed at answering Question \ref{Quest1} for other collections of tress $Y$. 

\begin{remark}\label{Rem1}
Suppose $\mathscr H\in\mathcal H(\pi)$ and $(j,\pi_j)\in NE(\mathscr H)$. If $(j,\pi_j)$ is given the color $c_t$ in the colored diagram of $\pi$ induced by $\mathscr H$, then $Q_t(\mathscr H)=\{\pi_j\}$ and $q_t(\mathscr H)=1$. Indeed, the hook colored $c_t$ is the hook with southwest endpoint $(j-1,\pi_{j-1})$ and northeast endpoint $(j,\pi_j)$. 
\end{remark}

Before we proceed to the next section, we record the following simple but useful lemma, which essentially states that the color classes mentioned above form increasing subsequences in $\pi$. 

\begin{lemma}\label{Lem1}
Let $\pi\in S_n$, and let $\mathscr H=(H_1,H_2,\ldots,H_m)\in\mathcal H(\pi)$. Let $r\in\{0,1,\ldots,m\}$. If $Q_r(\mathscr H)=\{(h_1,\pi_{h_1}),(h_2,\pi_{h_2}),\ldots,(h_k,\pi_{h_k})\}$, where $h_1<h_2<\cdots<h_k$, then $\pi_{h_1}<\pi_{h_2}<\cdots<\pi_{h_k}$. 
\end{lemma}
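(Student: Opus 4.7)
The plan is to argue by contradiction: suppose $h<h'$ with $(h,\pi_h),(h',\pi_{h'})\in Q_r(\mathscr H)$ yet $\pi_h>\pi_{h'}$, and derive a contradiction by producing a hook whose geometry is incompatible with property (d) of Definition \ref{Def1}.

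I would begin by locating a witnessing descent. Since $\pi_h>\pi_{h'}$, I let $d$ be the largest index in $[h,h'-1]$ satisfying $\pi_d>\pi_{h'}$. Maximality gives $\pi_{d+1}\leq\pi_{h'}<\pi_d$, so $d$ is a descent of $\pi$, and property (b) supplies a hook $H_\ell$ with ${}_eH_\ell=(d,\pi_d)$; write $H_\ell^e=(j,\pi_j)$. The crucial observation is that $j\geq h'$: if instead $j\in[d+1,h'-1]$, the maximality of $d$ would force $\pi_j\leq\pi_{h'}$, contradicting $\pi_j>\pi_d>\pi_{h'}$. Hence $h'\in(d,j]$ and $\pi_{h'}<\pi_j$, placing $(h',\pi_{h'})$ strictly beneath the top part of $H_\ell$.

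The remainder is a short case analysis. If $r=0$, then $(h',\pi_{h'})$ sits below a hook's top part, which is already incompatible with color $c_0$. If $(h',\pi_{h'})=H_r^e$, or if $(h,\pi_h)\in NE(\mathscr H)$ has color $c_r$, Remark \ref{Rem1} forces $|Q_r(\mathscr H)|=1$, contradicting that $Q_r$ contains both points. Otherwise, both $(h,\pi_h)$ and $(h',\pi_{h'})$ lie strictly under the top part of $H_r$; writing ${}_eH_r=(i_r,\pi_{i_r})$ and $H_r^e=(j_r,\pi_{j_r})$, this gives $i_r<h\leq d<h'<j_r$, so the projections $[d,j]$ and $[i_r,j_r]$ overlap in more than one point (they share both $d$ and $h'$). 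Since $(h',\pi_{h'})$ is colored $c_r$ while lying below both top parts, the top part of $H_r$ is hit first when moving upward from $(h',\pi_{h'})$, which (after a brief check of the tiebreaker rule) forces $\pi_{j_r}\leq\pi_j$. Property (d) then yields $[i_r,j_r]\subseteq[d,j]$, in particular $i_r\geq d$, contradicting $i_r<d$.

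The main difficulty lies in choosing $d$ correctly. A natural first attempt, such as taking $d$ to be the argmax of $\pi$ on $[h,h'-1]$, need not force the resulting hook $H_\ell$ to reach $h'$, and the decisive relation $h'\in(d,j]$ can fail. Choosing $d$ to be the largest index in $[h,h'-1]$ with $\pi_d>\pi_{h'}$ is what guarantees that every entry between $d$ and $h'$ is below $\pi_{h'}$, and in turn below any hook top originating at $(d,\pi_d)$, so that hook cannot terminate before $h'$.
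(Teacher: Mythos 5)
Your proof is correct and follows essentially the same route as the paper's: locate a witnessing descent between the two offending entries, use property (b) to produce a hook whose top part covers $(h',\pi_{h'})$ but not $(h,\pi_h)$, and then contradict property (d) by comparing that hook with the hook $H_r$ responsible for the color $c_r$. The only real difference is that you take $d$ to be the largest index in $[h,h'-1]$ with $\pi_d>\pi_{h'}$ rather than simply the largest descent in that range, which makes the key step $j>h'$ immediate, and you spell out the $r=0$ and northeast-endpoint cases that the paper's terser argument leaves implicit.
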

\begin{proof}
Suppose instead that $h_p<h_{p'}$ and $\pi_{h_p}>\pi_{h_{p'}}$ for some $p,p'\in[k]$. There is some descent $i$ of $\pi$ such that $h_p\leq i<h_{p'}$. Let us choose $i$ maximally. According to part (b) of Definition \ref{Def1}, $(i,\pi_i)={}_eH_\ell$ for some $\ell\in[m]$. Let $H_\ell^e=(j,\pi_j)$. Then $\pi_j>\pi_i$, so $j>h_{p'}$ because $i$ is the largest descent of $\pi$ that is less than $h_{p'}$. It follows that $(h_{p'},\pi_{h_{p'}})$ lies below the hook $H_\ell$ while $(h_p,\pi_{h_p})$ does not. Since $H_r$ is supposed to be the lowest hook lying above $(h_{p'},\pi_{h_{p'}})$, $H_r$ must lie below $H_\ell$. Since $(h_p,\pi_{h_p})$ lies below $H_r$ but does not lie below $H_\ell$, this means that $H_r$ lies below $H_\ell$ but does not lie \emph{completely} below $H_\ell$. In other words, we have contradicted part (d) of Definition \ref{Def1}. 
\end{proof}

\section{From Entries to $\mathbb N$-Trees}

Throughout this section, let $\pi=\pi_1\pi_2\cdots\pi_n\in S_n$. Let $\mathscr H=(H_1,H_2,\ldots,H_m)\in\mathcal H(\pi)$. At the end of the previous section, we defined $Q_t(\mathscr H)$ to be the set of entries $\pi_\ell$ such that $(\ell,\pi_\ell)$ is given the color $c_t$ in the colored diagram induced by $\mathscr H$. For each $t\in\{0,1,\ldots,m\}$, let $T_t$ be a decreasing $\mathbb N$-tree on $Q_t(\mathscr H)$ whose postorder lists the elements of $Q_t(\mathscr H)$ in increasing order. Let $\mathscr T=(T_0,T_1,\ldots,T_m)$. Choosing each tree $T_t$ amounts to choosing the (unlabeled) $\mathbb N$-tree with $q_t(\mathscr H)=\vert Q_t(\mathscr H)\vert$ vertices that serves as the underlying shape of $T_t$; the labeling of $T_t$ is then completely determined by the requirement that $P(T_t)$, the postorder of $T_t$, be in increasing order. Therefore, the number of ways to choose the tree $T_t$ is equal to the number of $\mathbb N$-trees with $q_t(\mathscr H)$ vertices, which is $C_{q_t(\mathscr H)-1}$. It follows that the number of ways to choose $\mathscr T$ is $\displaystyle{\prod_{t=0}^mC_{q_t(\mathscr H)-1}}$.  

\begin{example}\label{Exam3}
Figure \ref{Fig3} shows one possible collection of trees $T_0,T_1,\ldots,T_7$ that could arise from the permutation $\pi$ and the valid hook configuration $\mathscr H$ given in Example \ref{Exam2}. Observe that the postorder of each one of these trees is in increasing order. 

\begin{figure}[t]
\begin{center} 
\includegraphics[width=.6\linewidth]{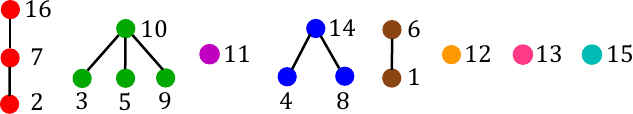}
\end{center}
\captionof{figure}{A collection of $\mathbb N$-trees $T_0,T_1,\ldots,T_7$.} \label{Fig3}
\end{figure}
\end{example}

We are going to describe a procedure for building $\mathbb N$-trees whose postorders are $\pi$ from the valid hook configuration
$\mathscr H$ and the collection of trees $T_0,T_1,\ldots,T_m$. We will then show that this procedure provides a one-to-one correspondence between pairs $(\mathscr H,\mathscr T)$ and decreasing $\mathbb N$-trees whose postorders are $\pi$. This is how we obtain the formula \eqref{Eq1} for the number of decreasing $\mathbb N$-trees with postorder $\pi$. 

The procedure works by constructing a sequence of decreasing $\mathbb N$-trees $\tau_n,\tau_{n-1},\ldots,\tau_1$, where the vertices of $\tau_i$ are the entries $\pi_i,\pi_{i+1},\ldots,\pi_n$. The final tree in the sequence, $\tau_1$, will be the $\mathbb N$-tree with postorder $\pi$ that we want. To begin the procedure, let $\tau_n$ be the tree consisting of the single vertex $\pi_n$. Now, suppose we have built the trees $\tau_n,\tau_{n-1},\ldots,\tau_{\ell+1}$. We will build the tree $\tau_\ell$ from $\tau_{\ell+1}$ by attaching $\pi_\ell$ as a leaf under one of the vertices of $\tau_{\ell+1}$. We consider two cases. 

Case 1: Suppose $(\ell,\pi_\ell)\in SW(\mathscr H)$. In this case, $(\ell,\pi_\ell)$ is the southwest endpoint of a unique hook $H_i$. Let $H_i^e=(j,\pi_j)$. To build the tree $\tau_\ell$ from $\tau_{\ell+1}$, attach $\pi_\ell$ as child of $\pi_j$ so that $\pi_\ell$ is the first (leftmost) child of $\pi_j$ in the tree $\tau_\ell$. 

Case 2: Suppose $(\ell,\pi_\ell)\not\in SW(\mathscr H)$. In this case, let $c_r$ be the color assigned to $(\ell+1,\pi_{\ell+1})$ (so $(\ell+1,\pi_{\ell+1})\in Q_r(\mathscr H)$). Let $u$ be the largest element of the set $[\ell]$ such that $(u,\pi_u)\in Q_r(\mathscr H)$. Let $\pi_v$ be the parent of $\pi_u$ in the tree $T_r$. To build the tree $\tau_\ell$ from $\tau_{\ell+1}$, attach $\pi_\ell$ as child of $\pi_v$ so that $\pi_\ell$ is the first (leftmost) child of $\pi_v$ in the tree $\tau_\ell$. 

In Case 2, we need to make sure that $u$ always exists and that the parent $\pi_v$ of $\pi_u$ in $T_r$ exists and is also a vertex in $\tau_{\ell+1}$. To show that $u$ exists, we need to show that there is some $i\in[\ell]$ such that $(i,\pi_i)\in Q_r(\mathscr H)$. If $r=0$, then we may set $i=1$, so assume $r>0$. Recall from part (c) of Definition \ref{Def1} that if $(j,\pi_j)\in NE(\mathscr H)$, then $(j-1,\pi_{j-1})\in SW(\mathscr H)$. Therefore, $(\ell+1,\pi_{\ell+1})\not\in NE(\mathscr H)$. Let $(h,\pi_h)={}_eH_r$. The points $(\ell,\pi_\ell)$ and $(\ell+1,\pi_{\ell+1})$ must both lie below $H_r$, so $h<\ell$. This means that $h+1\in[\ell]$ and $(h+1,\pi_{h+1})\in Q_r(\mathscr H)$, so we may set $i=h+1$. Now, Lemma \ref{Lem1} tells us that the entries $\pi_s$ such that $(s,\pi_s)\in Q_r(\mathscr H)$ form an increasing subsequence of $\pi$.  Consequently, $\pi_{\ell+1}>\pi_u$. It follows that $\pi_u$ cannot be the root of $T_r$, so $\pi_u$ has a parent $\pi_v$ in $T_r$. Because $T_r$ is a decreasing tree, $\pi_v>\pi_u$. Therefore, $v>u$ by Lemma \ref{Lem1}. Because $u$ was chosen to be the \emph{largest} element of $[\ell]$ such that $(u,\pi_u)\in Q_r(\mathscr H)$, it follows that $v\geq \ell+1$. Thus, $\pi_v$ is a vertex in $\tau_{\ell+1}$. 

\begin{figure}[t]
\begin{center} 
\includegraphics[height=5.0cm]{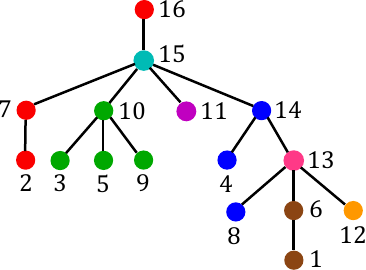}
\end{center}
\captionof{figure}{The tree $\tau_1$ obtained when $\pi$, $\mathscr H$, and $\mathscr T$ are as in Examples \ref{Exam2} and \ref{Exam3}.}\label{Fig4}
\end{figure}

Figure \ref{Fig4} shows the tree $\tau_1$ that results from this procedure when $\pi$, $\mathscr H$, and $\mathscr T$ are the permutation, the valid hook configuration, and the $8$-tuple of trees given in Example \ref{Exam2} and Example \ref{Exam3}. Observe that each of the trees $T_0,T_1,\ldots,T_7$ is, in some sense, embedded in this tree $\tau_1$. For example, the green tree $T_1$ is an actual subgraph of $\tau_1$. If we imagine deleting the vertex $13$ and then contracting the edge that joined the vertices $8$ and $13$, we will see a copy of the blue tree $T_3$ sitting inside of the larger tree. By appealing to the description of the above procedure, the reader may convince herself that this phenomenon will always occur for all of the trees $T_0,T_1,\ldots,T_m$. More precisely, if $\pi_i$ and $\pi_j$ are vertices of a tree $T_t$, then $\pi_i$ is in the $h^\text{th}$ subtree (meaning the $h^\text{th}$ subtree from the left) of $\pi_j$ in $\tau_1$ if and only if $\pi_i$ is in the $h^{\text{th}}$ subtree of $\pi_j$ in $T_t$. We make use of this fact in the proof of the following proposition. 

\begin{proposition}\label{Prop1}
Let $\tau_n,\tau_{n-1},\ldots,\tau_1$ be the sequence of decreasing $\mathbb N$-trees constructed in the above procedure. The postorder reading of the tree $\tau_\ell$ is $P(\tau_\ell)=\pi_\ell\pi_{\ell+1}\cdots\pi_n$. In particular, $P(\tau_1)=\pi$.  
\end{proposition}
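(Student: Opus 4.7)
The plan is to proceed by reverse induction on $\ell$, starting at $\ell = n$ and decreasing. The base case is immediate: $\tau_n$ is the single-vertex tree on $\pi_n$, whose postorder is just $\pi_n$. For the inductive step, I assume $P(\tau_{\ell+1}) = \pi_{\ell+1}\pi_{\ell+2}\cdots\pi_n$ and aim to deduce $P(\tau_\ell) = \pi_\ell\pi_{\ell+1}\cdots\pi_n$.

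Call the \emph{left spine} of a decreasing plane tree the path from its root obtained by repeatedly taking the leftmost child. A short recursive analysis of the postorder traversal yields the following general fact: if $\sigma$ is a decreasing plane tree and one attaches a new leaf as the leftmost child of some vertex $w$ that lies on the left spine of $\sigma$, then the postorder of the resulting tree is the new leaf followed by $P(\sigma)$. Since the construction attaches $\pi_\ell$ as the leftmost child of a specific vertex $\pi_w$ of $\tau_{\ell+1}$ (namely $\pi_j$ in Case~1 and $\pi_v$ in Case~2), the inductive step reduces to showing that $\pi_w$ lies on the left spine of $\tau_{\ell+1}$.

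To verify this reduction, I would strengthen the induction hypothesis to describe the left spine of $\tau_{\ell+1}$ explicitly in terms of $\mathscr H$ and the trees $T_0,T_1,\ldots,T_m$. The invariant would say that, starting from the leaf $\pi_{\ell+1}$, the spine climbs upward by alternating two kinds of moves: (i) within a single color class $Q_r(\mathscr H)$, the spine traces a $T_r$-ancestor chain of color-$r$ vertices of $\tau_{\ell+1}$; and (ii) at the top of each such color-$r$ segment, the spine jumps to the northeast endpoint of the corresponding hook, which by Remark \ref{Rem1} is the sole element of its own color class and therefore transitions cleanly into the color class immediately above. Lemma \ref{Lem1} guarantees that parent relations within each $T_r$ are consistent with the linear order of positions, so the relevant parents in $T_r$ are in fact vertices of the partially built tree $\tau_{\ell+1}$; part (d) of Definition \ref{Def1} controls the nesting of hooks and forces the color transitions along the spine to correspond correctly to the geometry of the colored diagram.

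With this invariant in place, the two cases of the procedure become direct verifications. In Case~1, the vertex $\pi_j = H_i^e$ is exactly the point at which the spine jumps up along $H_i$, so inserting $\pi_\ell$ as its new leftmost child extends the spine in a way that preserves the invariant for $\tau_\ell$. In Case~2, the maximality of $u$ in $[\ell]$ identifies $\pi_v$ as the next $T_r$-ancestor above the topmost color-$r$ vertex currently on the spine of $\tau_{\ell+1}$, so $\pi_v$ lies on that spine and attaching $\pi_\ell$ beneath it again extends the spine consistently. The main obstacle is crafting the spine invariant precisely enough that the two procedure cases dovetail with the two kinds of spine moves, and checking carefully the boundary behavior (the top of each color-$r$ segment and the very top of the spine). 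Once this is done, the postorder equality follows immediately from the reduction in the second paragraph.
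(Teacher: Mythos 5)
Your reduction is sound and is, in substance, the same one the paper uses: by the inductive hypothesis $\pi_{\ell+1}$ is the first entry of $P(\tau_{\ell+1})$, so it sits at the bottom of the left spine, and attaching $\pi_\ell$ as a leaf that is the leftmost child of a spine vertex prepends $\pi_\ell$ to the postorder. Equivalently, everything comes down to showing that the attachment vertex ($\pi_j$ in Case~1, $\pi_v$ in Case~2) is an ancestor of $\pi_{\ell+1}$ in $\tau_{\ell+1}$. The problem is that this is the entire content of the proposition, and you do not prove it: you defer it to a ``spine invariant'' that you only describe qualitatively, and you yourself identify crafting that invariant as ``the main obstacle.'' As stated, the invariant also has concrete imprecisions. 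In Case~2 you say the maximality of $u$ identifies $\pi_v$ as the next $T_r$-ancestor above ``the topmost color-$r$ vertex currently on the spine of $\tau_{\ell+1}$,'' but $\pi_v$ is defined as the $T_r$-parent of $\pi_u$ with $u\leq\ell$, and $\pi_u$ is not yet a vertex of $\tau_{\ell+1}$; relating the $T_r$-parent of this not-yet-inserted vertex to an actual ancestor of $\pi_{\ell+1}$ in the partially built tree is precisely the step that needs an argument. Likewise, the claimed alternation between full $T_r$-ancestor chains and jumps to ``the corresponding hook'' is not obviously correct: the parent of the topmost color-$r$ vertex on the spine is determined by whichever case of the procedure applied when that vertex was inserted, which need not be the hook $H_r$.

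For comparison, the paper gets the same conclusion without ever describing the whole spine. In Case~1 it observes that $(\ell+1,\pi_{\ell+1})$ receives the color $c_s$ of the hook $H_s$ with ${}_eH_s=(\ell,\pi_\ell)$, and deduces from the construction that $\pi_{\ell+1}$ is a descendant of $\pi_j=H_s^e$; that alone puts $\pi_j$ on the root-to-$\pi_{\ell+1}$ path. In Case~2 it argues by contradiction: using Lemma \ref{Lem1} and the embedding of $T_r$ in $\tau_1$, the entry $\pi_u$ precedes $\pi_{\ell+1}$ in $P(\tau_1)$ and is a descendant of $\pi_v$; if $\pi_{\ell+1}$ preceded $\pi_\ell$, then $\pi_{\ell+1}$ would lie between two descendants of $\pi_v$ in the postorder and hence be a descendant of $\pi_v$ itself, which forces $\pi_\ell$ (the leftmost child of $\pi_v$) to come first after all. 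If you want to salvage your approach, the cleanest fix is to replace the global spine invariant with exactly this local claim---``the attachment vertex is an ancestor of $\pi_{\ell+1}$''---and prove it case by case along the paper's lines; otherwise you must actually state and verify the strengthened invariant, which is considerably more work than the two sentences you allot to it.
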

\begin{proof}
We prove the proposition by inducting backward on $\ell$, noting first that the claim is trivial if $\ell=n$. Suppose $\ell<n$, and assume that $P(\tau_{\ell+1})=\pi_{\ell+1}\pi_{\ell+2}\cdots\pi_n$. Because $\tau_\ell$ is built by attaching $\pi_\ell$ to the tree $\tau_{\ell+1}$ as a leaf, the postorder of $\tau_\ell$ is obtained by inserting $\pi_\ell$ somewhere into $P(\tau_\ell)$. In other words, there are strings $L$ and $R$ such that $P(\tau_{\ell+1})=LR$ and $P(\tau_\ell)=L\pi_\ell R$. In order to complete the proof of the proposition, we need to show that $L$ is empty. This amounts to showing that $\pi_\ell$ precedes $\pi_{\ell+1}$ in $P(\tau_\ell)$.  

Suppose first that $(\ell,\pi_\ell)\in SW(\mathscr H)$. Let $H_s$ be the hook with ${}_eH_s=(\ell,\pi_\ell)$, and let $(j,\pi_j)=H_s^e$. Note that the point $(\ell+1,\pi_{\ell+1})$ is given the color $c_s$. It follows from the description of the procedure used to build the sequence of trees $\tau_n,\tau_{n-1},\ldots,\tau_{\ell+1}$ that $\pi_{\ell+1}$ is a descendant of $\pi_j$ in $\tau_{\ell+1}$ (therefore, also in $\tau_\ell$). In $\tau_\ell$, the vertex $\pi_\ell$ is a leaf which is the leftmost child of $\pi_j$. Consequently, $\pi_\ell$ precedes $\pi_{\ell+1}$ in $P(\tau_\ell)$. 

Next, suppose $(\ell,\pi_\ell)\not\in SW(\mathscr H)$. Preserve the notation from the description of the procedure in Case 2 above. Lemma \ref{Lem1} tells us that $\pi_u<\pi_{\ell+1}$ because $u<\ell+1$. We constructed $T_r$ so that its postorder would be in increasing order, so $\pi_u$ precedes $\pi_{\ell+1}$ in $P(T_r)$. In the paragraph preceding this proposition, we mentioned that $T_r$ is embedded in $\tau_1$. Therefore, $\pi_u$ precedes $\pi_{\ell+1}$ in $P(\tau_1)$. Moreover, $\pi_u$ must be a descendant of $\pi_v$ in $\tau_1$ because $\pi_u$ is a descendant of $\pi_v$ in $T_r$. Now, suppose that $\pi_{\ell+1}$ precedes $\pi_\ell$ in $P(\tau_\ell)$. Then $\pi_{\ell+1}$ precedes $\pi_\ell$ in $P(\tau_1)$, so $\pi_{\ell+1}$ appears between $\pi_u$ and $\pi_\ell$ in $P(\tau_1)$. Because $\pi_u$ and $\pi_\ell$ are both descendants of $\pi_v$ in $\tau_1$, $\pi_{\ell+1}$ must be a descendant of $\pi_v$ in $\tau_1$. Consequently, $\pi_{\ell+1}$ is a descendant of $\pi_v$ in $\tau_\ell$. However, this implies that $\pi_\ell$ precedes $\pi_{\ell+1}$ in $P(\tau_\ell)$ because $\pi_\ell$ is a leaf which is the leftmost child of $\pi_v$ in $\tau_\ell$. 
\end{proof} 

Now that we have shown that the above procedure creates a tree $\tau_1$ with postorder $\pi$, our goal is to show that each decreasing $\mathbb N$-tree with postorder $\pi$ is obtained in this way from a unique pair $(\mathscr H,\mathscr T)$. In the following theorem, let $g(\mathscr H,\mathscr T)$ denote the tree $\tau_1$ that our procedure produces from the valid hook configuration $\mathscr H$ and the tuple of decreasing $\mathbb N$-trees $\mathscr T$. It might be useful to keep Example \ref{Exam2}, Example \ref{Exam3}, and Figure \ref{Fig4} in mind during the following proof. 
\begin{theorem}\label{Thm1}
Let $\tau$ be a decreasing $\mathbb N$-tree whose postorder reading is $P(\tau)=\pi$. There is a unique pair $(\mathscr H,\mathscr T)$ such that $\tau=g(\mathscr H,\mathscr T)$.
\end{theorem}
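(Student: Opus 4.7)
The plan is to exhibit, for each $\tau$ with $P(\tau)=\pi$, a unique candidate pair $(\mathscr H,\mathscr T)$ read off directly from $\tau$, and then verify $g(\mathscr H,\mathscr T)=\tau$. The canonical construction of $\mathscr H$ is forced: Case 1 of the procedure attaches $\pi_i$ as the leftmost child of $\pi_j$, and subsequent steps only add further leftmost children, never altering $\pi_i$'s parent. Hence if $g(\mathscr H,\mathscr T)=\tau$ and a hook has southwest endpoint $(i,\pi_i)$ and northeast endpoint $(j,\pi_j)$, then $\pi_j$ must be the parent of $\pi_i$ in $\tau$. Combined with Definition \ref{Def1}(b), this forces us to include, for each descent $i$, the hook from $(i,\pi_i)$ to $(p,\pi_p)$ where $\pi_p$ is the parent of the descent top $\pi_i$ in $\tau$. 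Definition \ref{Def1}(c) then forces, for each $\pi_j$ that has become a northeast endpoint, an additional hook from $(j-1,\pi_{j-1})$ to $(j,\pi_j)$; here $\pi_{j-1}$ is automatically the rightmost child of $\pi_j$ in $\tau$, since postorder reads the root of $\pi_j$'s rightmost subtree immediately before $\pi_j$ itself. This pins down $\mathscr H$ uniquely.

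Next I would verify Definition \ref{Def1}. Conditions (a)--(c) hold by construction; the content lies in (d). For this I would use the observation that the projection $[i,j]$ of the hook $(i,\pi_i)\to(j,\pi_j)$ is precisely the set of postorder positions of $\pi_i$, together with the subtrees of the right-siblings of $\pi_i$ among the children of $\pi_j$, and $\pi_j$ itself; nesting of two such intervals when they overlap then reduces to nesting of the corresponding collections of subtrees in $\tau$, which is forced by the rooted tree structure. With $\mathscr H$ and its color classes determined, I would define $\mathscr T$ by taking $T_t$ to be the restriction of $\tau$ to $Q_t(\mathscr H)$: the parent of $u\in Q_t(\mathscr H)$ in $T_t$ is its nearest $\tau$-ancestor lying in $Q_t(\mathscr H)$, and children retain their relative left-to-right order from $\tau$. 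Lemma \ref{Lem1} ensures that the postorder of each $T_t$ lists $Q_t(\mathscr H)$ in increasing order, as required, so $\mathscr T$ is admissible and likewise uniquely determined by $\tau$.

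Finally I would verify $g(\mathscr H,\mathscr T)=\tau$ by reverse induction on $\ell$, with the invariant that $\tau_\ell$ agrees with the subgraph of $\tau$ induced on $\{\pi_\ell,\ldots,\pi_n\}$ under the closest-ancestor restriction. Case 1 of the procedure is immediate from the construction of $\mathscr H$. In Case 2, where $\pi_\ell$ is not a descent top (nor a rightmost child of an incumbent northeast endpoint), the procedure attaches $\pi_\ell$ as leftmost child of $\pi_v$, the parent in $T_r$ of the largest-indexed earlier element $\pi_u$ of $Q_r(\mathscr H)$. I would show that $\pi_v$ equals the parent of $\pi_\ell$ in $\tau$ by invoking the defining embedding of $T_r$ into $\tau$ together with the extremality of $\pi_u$: walking up from $\pi_\ell$ along $\tau$-ancestors, the first $Q_r$-element encountered must be $\pi_v$. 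Uniqueness of $(\mathscr H,\mathscr T)$ then follows immediately from the forced nature of the construction above. The main obstacle I anticipate is verifying Definition \ref{Def1}(d) for the candidate $\mathscr H$ together with the Case 2 identification of $\pi_v$ with the true parent of $\pi_\ell$; both hinge on a careful translation between hook geometry, the color-class partition, and the ancestor structure of $\tau$.
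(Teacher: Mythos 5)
Your overall strategy --- read a canonical pair $(\mathscr H,\mathscr T)$ off of $\tau$, check validity, verify $g(\mathscr H,\mathscr T)=\tau$, and deduce uniqueness from the forcedness of the construction --- is the same as the paper's, and your necessary condition (every hook must join a vertex to its parent in $\tau$) is correct. The gap is in your identification of $\mathscr H$: you take only the hooks forced by Definition~\ref{Def1}(b) and (c), namely one hook per descent and one ``rightmost-child'' hook per northeast endpoint, and declare that this pins down $\mathscr H$ uniquely. It does not. The correct characterization is that there is a hook from $(h,\pi_h)$ to $(j,\pi_j)$ if and only if $\pi_j$ is the parent in $\tau$ of some descent top \emph{and} $\pi_h$ is \emph{any} child of $\pi_j$ in $\tau$ --- including children that are neither descent tops nor the rightmost child. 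The reason every such child needs its own hook is Remark~\ref{Rem1}: a point of $NE(\mathscr H)$ forms a singleton color class, so Case~2 of the procedure can never attach a vertex to it; hence every child of such a $\pi_j$ must be supplied by Case~1, i.e.\ by a hook with northeast endpoint $(j,\pi_j)$.

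Concretely, take $\pi=4125$ and let $\tau$ be the tree with root $5$ and three leaf children $4,1,2$ in that order. Here $N=\{5\}$, and the pair producing $\tau$ has three hooks, from $(1,4)$, $(2,1)$ and $(3,2)$ to $(4,5)$. Your construction produces only the two hooks from $(1,4)$ and $(3,2)$, which is the configuration belonging to a \emph{different} preimage of $\pi$ (the tree in which $1$ is a child of $2$); running the procedure on your pair does not return $\tau$. The omission also breaks two later steps: your uniqueness argument shows only that certain hooks must be present, not that no others may be, and your definition of $T_t$ as the nearest-ancestor restriction of $\tau$ to $Q_t(\mathscr H)$ can fail to be a tree when $\mathscr H$ is wrong (in the example, your two-hook configuration gives $Q_1=\{1,2\}$, but $1$ and $2$ are siblings in $\tau$, so the restriction has two roots). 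With the full hook set --- one hook for every child of every vertex in $N$ --- the rest of your outline goes through essentially as in the paper.
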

\begin{proof}
Let $N$ be the set of entries $\pi_j$ of $\pi$ such that $\pi_j$ is a parent of a descent top of $\pi$ in $\tau$. Suppose $g(\mathscr H,\mathscr T)=\tau$, where $\mathscr H=(H_1,H_2,\ldots,H_m)$ and $\mathscr T=(T_0,T_1,\ldots,T_m)$. According to Definition \ref{Def1}, a point $(j,\pi_j)$ is a northeast endpoint of a hook in $\mathscr H$ if and only if it is the northeast endpoint of a hook whose southwest endpoint is $(i,\pi_i)$ for some descent $i$ of $\pi$. Referring to Case 1 of the procedure, this happens if and only if $\pi_j$ is the parent of $\pi_i$ in $\tau$. Therefore, $NE(\mathscr H)$ is precisely the collection of points
$(j,\pi_j)$ such that $\pi_j\in N$. We can now determine exactly what the hooks in $\mathscr H$ must be by observing that the points in $SW(\mathscr H)$ correspond to the children in $\tau$ of the points $\pi_j$ such that $(j,\pi_j)\in NE(\mathscr H)$. More precisely, there is a hook $H$ in $\mathscr H$ with ${}_eH=(h,\pi_h)$ and $H^e=(j,\pi_j)$ if and only if $\pi_j\in N$ and $\pi_h$ is a child of $\pi_j$ in $\tau$. Hence, $\mathscr H$ is uniquely determined by $\tau$. 

The paragraph preceding the proof of Proposition \ref{Prop1} tells us that each tree $T_t$ is embedded in $\tau$. More precisely, if $t\in\{0,1,\ldots,m\}$ and $(i,\pi_i),(j,\pi_j)\in Q_t(\mathscr H)$, then $\pi_i$ is in the $h^\text{th}$ subtree of $\pi_j$ in $T_t$ if and only if $\pi_i$ is in the $h^\text{th}$ subtree of $\pi_j$ in $\tau$. Therefore, $\mathscr T$ is uniquely determined by $\tau$. We have shown that there is at most one pair $(\mathscr H,\mathscr T)$ such that $g(\mathscr H,\mathscr T)=\tau$. 

On the other hand, it turns out that these properties of $\mathscr H$ and $\mathscr T$, which are necessary for $g(\mathscr H,\mathscr T)=\tau$, are sufficient. We may define a valid hook configuration $\mathscr H'=(H_1',H_2',\ldots,H_{m'}')$ by requiring that there is a hook in $\mathscr H$ with southwest endpoint $(i,\pi_i)$ and northeast endpoint $(j,\pi_j)$ if and only if $\pi_i$ is a child of $\pi_j$ in $\tau$ and $\pi_j\in N$. For each $t\in\{0,1,\ldots,m'\}$, define a decreasing $\mathbb N$-tree $T_t'$ on $Q_t(\mathscr H')$ by insisting that for any $(i,\pi_i),(j,\pi_j)\in Q_t(\mathscr H')$, $\pi_i$ is in the $h^\text{th}$ subtree of $\pi_j$ in $T_t'$ if and only if $\pi_i$ is in the $h^\text{th}$ subtree of $\pi_j$ in $\tau$. Let $\mathscr T'=(T_0',T_1',\ldots,T_{m'}')$. With these definitions, $g(\mathscr H',\mathscr T')=\tau$. 
\end{proof}
We are now able to obtain the formula \eqref{Eq1}. Recall the definition of $\mathcal H(\pi)$ from Definition \ref{Def1}. For any $\mathscr H\in\mathcal H(\pi)$, recall that $\vert \mathscr H\vert$ denotes the number of hooks in $\mathscr H$ and that $q_t(\mathscr H)$ denotes the number of points $(i,\pi_i)$ that are given the color $c_t$ in the colored diagram induced by $\mathscr H$. 
\begin{theorem}\label{Thm2}
Let $\pi\in S_n$. The number of decreasing $\mathbb N$-trees with postorder $\pi$ is \[\sum_{\mathscr H\in\mathcal H(\pi)}\prod_{t=0}^{\vert \mathscr H\vert}C_{q_t(\mathscr H)-1}.\]
\end{theorem}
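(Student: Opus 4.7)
The plan is to treat Theorem~\ref{Thm2} as a direct counting consequence of Theorem~\ref{Thm1} (together with Proposition~\ref{Prop1}). Taken together, these two results show that the procedure-map $g$ is a bijection between pairs $(\mathscr H,\mathscr T)$, where $\mathscr H\in\mathcal H(\pi)$ and $\mathscr T=(T_0,\ldots,T_{|\mathscr H|})$ is an admissible tuple of decreasing $\mathbb N$-trees, and decreasing $\mathbb N$-trees whose postorder reading is $\pi$. Proposition~\ref{Prop1} gives surjectivity (every pair maps to a tree with postorder $\pi$) and Theorem~\ref{Thm1} gives injectivity (each such tree arises from a unique pair). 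Thus the problem reduces to enumerating these pairs, and I would break the count according to the choice of $\mathscr H$.

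Next I would fix $\mathscr H\in\mathcal H(\pi)$ with $m=|\mathscr H|$ and count the tuples $\mathscr T$ compatible with it. Each $T_t$ is a decreasing $\mathbb N$-tree on $Q_t(\mathscr H)$ whose postorder lists the elements of $Q_t(\mathscr H)$ in increasing order. As observed in the opening paragraph of Section~3, once the underlying unlabeled shape of $T_t$ is chosen, the requirement that $T_t$ be decreasing and have the prescribed increasing postorder forces the labeling. Therefore the count of admissible $T_t$ equals the number of unlabeled $\mathbb N$-trees on $q_t(\mathscr H)$ vertices. Since an $\mathbb N$-tree is just an ordinary rooted plane tree with no constraint on out-degrees, this number is the Catalan number $C_{q_t(\mathscr H)-1}$.

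Taking the product over $t$ yields $\prod_{t=0}^{m}C_{q_t(\mathscr H)-1}$ admissible tuples $\mathscr T$ per configuration $\mathscr H$, and summing over $\mathscr H\in\mathcal H(\pi)$ produces the asserted expression. There is essentially no remaining obstacle, since the combinatorial heavy lifting has been done in Theorem~\ref{Thm1}; the only thing worth a quick sanity check is the identity case $\pi=12\cdots n$, where the convention of Remark~\ref{Rem2} makes $\mathcal H(\pi)$ consist of a single empty configuration with a single color class of size $n$, so the formula correctly collapses to $C_{n-1}$, in agreement with the number of decreasing $\mathbb N$-trees on $[n]$ whose postorder is the identity.
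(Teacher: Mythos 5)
Your proposal is correct and follows essentially the same route as the paper: Theorem \ref{Thm1} (with Proposition \ref{Prop1}) supplies the bijection between decreasing $\mathbb N$-trees with postorder $\pi$ and pairs $(\mathscr H,\mathscr T)$, and the count of admissible tuples $\mathscr T$ for a fixed $\mathscr H$ is exactly the Catalan product established in the opening paragraph of Section 3. The sanity check for the identity permutation is a nice addition but not part of the paper's argument.
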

\begin{proof}
Theorem \ref{Thm1} tells us that there is a one-to-one correspondence between decreasing $\mathbb N$-trees with postorder $\pi$ and pairs of the form $(\mathscr H,\mathscr T)$. In the beginning paragraph of this section, we showed that for any given $\mathscr H\in\mathcal H(\pi)$, the number of ways to choose $\mathscr T$ is $\displaystyle{\prod_{t=0}^{\vert \mathscr H\vert}C_{q_t(\mathscr H)-1}}$. 
\end{proof}
\section{Extensions and Refinements}
We continue to let $\pi=\pi_1\pi_2\cdots\pi_n$ be an arbitrary permutation in $S_n$. 

In the last section, we gave a procedure for constructing a sequence of decreasing $\mathbb N$-trees $\tau_n,\tau_{n-1},\ldots,\tau_1$ such that $P(\tau_1)=\pi$. The purpose of this section is to show how modifying that procedure can lead to results similar to Theorem \ref{Thm2} concerning different types of decreasing plane trees. 

Recall from the introduction that we defined two important types of plane trees: $S$-trees and $d$-ary plane trees. If $S$ is a set of nonnegative integers with $0\in S$, then an $S$-tree is constructed from a root along with a $j$-tuple of $S$-trees for some $j\in S$. If $d$ is a positive integer, then a $d$-ary plane tree is either empty or is a root along with a $d$-tuple of (possibly empty) $d$-ary plane trees. If $X$ is a set of positive integers, then a decreasing plane tree (of either type) on $X$ is a plane tree whose vertices have been labeled with the elements of $X$ so that the label of any nonroot vertex is smaller than the label of its parent. 

\begin{definition}\label{Def3}
If $\mathscr H\in\mathcal H(\pi)$ and $j\in[n]$, let $w_{j}(\mathscr H)$ denote the number of hooks in $\mathscr H$ with northeast endpoint $(j,\pi_j)$. 
If $0\in S\subseteq\mathbb N$, define $\mathcal H_S(\pi)$ to be the set of valid hook configurations $\mathscr H\in\mathcal H(\pi)$ such that $w_{j}(\mathscr H)\in S$ for all $j\in[n]$. 
\end{definition}

According to the definition of $NE(\mathscr H)$ in Definition \ref{Def1}, $(j,\pi_j)\in NE(\mathscr H)$ if and only if $w_{j}(\mathscr H)>0$. In the proof of Theorem \ref{Thm1}, we were given a decreasing $\mathbb N$-tree $\tau$ with $P(\tau)=\pi$ and showed that $\tau=g(\mathscr H,\mathscr T)$ for a unique pair $(\mathscr H,\mathscr T)$. We let $N$ be the set of entries $\pi_j$ such that $\pi_j$ was a parent of a descent top of $\pi$ in $\tau$. It then turned out that $\pi_j\in N$ if and only if $(j,\pi_j)\in NE(\mathscr H)$. The number of children of each vertex $\pi_j\in N$ was $w_{j}(\mathscr H)$, the number of hooks in $\mathscr H$ with northeast endpoint $(j,\pi_j)$. Furthermore, if $(j,\pi_j)\not\in NE(\mathscr H)$ and $\pi_j$ was a vertex in $T_t$, then the number of children of $\pi_j$ in $\tau$ was equal to the number of children of $\pi_j$ in $T_t$. We will make use of these observations in the proofs of the following theorems. 

\begin{theorem}\label{Thm3}
Let $S$ be a set of nonnegative integers with $0\in S$. Let $D_S(r)$ denote the number of $S$-trees with $r$ vertices. The number of decreasing $S$-trees with postorder $\pi$ is \[\sum_{\mathscr H\in\mathcal H_S(\pi)}\prod_{t=0}^{\vert \mathscr H\vert}D_S(q_t(\mathscr H)).\]
\end{theorem}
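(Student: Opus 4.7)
The plan is to mimic the proof of Theorem \ref{Thm2} with two parallel modifications: the component trees $T_t$ in the tuple $\mathscr T$ should be required to be decreasing $S$-trees instead of decreasing $\mathbb N$-trees, and the valid hook configurations should be drawn from $\mathcal H_S(\pi)$ rather than $\mathcal H(\pi)$. First I would apply the very same procedure $g(\mathscr H,\mathscr T)$ from Section 3 to such a pair and note that Proposition \ref{Prop1}, whose proof nowhere invoked a restriction on the shapes of the $T_t$, still guarantees that the resulting tree $\tau$ satisfies $P(\tau)=\pi$ and is decreasing.

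The crux is to pin down exactly when $\tau=g(\mathscr H,\mathscr T)$ is an $S$-tree. As the paragraph preceding the theorem records, the number of children of a vertex $\pi_j$ in $\tau$ equals $w_j(\mathscr H)$ when $(j,\pi_j)\in NE(\mathscr H)$, and equals the number of children of $\pi_j$ in the unique $T_t$ containing it when $(j,\pi_j)\notin NE(\mathscr H)$. Consequently $\tau$ is an $S$-tree if and only if (i) $w_j(\mathscr H)\in S$ for every $j$ with $(j,\pi_j)\in NE(\mathscr H)$, which is precisely the condition $\mathscr H\in\mathcal H_S(\pi)$ (bearing in mind that $w_j(\mathscr H)=0\in S$ whenever $(j,\pi_j)\notin NE(\mathscr H)$), and (ii) every internal vertex in each $T_t$ has a number of children lying in $S$, i.e., each $T_t$ is a decreasing $S$-tree. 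Remark \ref{Rem1} takes care of the potential edge case in which a vertex $\pi_j$ with $(j,\pi_j)\in NE(\mathscr H)$ also sits in some $Q_t(\mathscr H)$: in that situation $T_t$ is the singleton $\{\pi_j\}$, which is automatically an $S$-tree because $0\in S$.

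Combining this equivalence with Theorem \ref{Thm1}, the bijection $(\mathscr H,\mathscr T)\mapsto g(\mathscr H,\mathscr T)$ restricts to a bijection between decreasing $S$-trees with postorder $\pi$ and pairs $(\mathscr H,\mathscr T)$ with $\mathscr H\in\mathcal H_S(\pi)$ and each $T_t$ a decreasing $S$-tree on $Q_t(\mathscr H)$ whose postorder is increasing. For the final counting step I would argue, as in the opening paragraph of Section 3, that specifying a decreasing tree on a fixed label set with a prescribed increasing postorder is equivalent to specifying its underlying unlabeled shape: the root must carry the largest label (it is read last in postorder and dominates its descendants), and recursing inside each subtree forces the labeling. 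Hence the number of choices for $T_t$ is $D_S(q_t(\mathscr H))$, so for a fixed $\mathscr H\in\mathcal H_S(\pi)$ the number of admissible $\mathscr T$ is $\prod_{t=0}^{|\mathscr H|}D_S(q_t(\mathscr H))$, and summing over $\mathscr H\in\mathcal H_S(\pi)$ yields the stated formula.

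The only step demanding genuine care is the biconditional characterization of $S$-tree outputs in terms of the two local conditions on $\mathscr H$ and on the $T_t$; once that is established, everything else is a direct restriction of what was already proved in Section 3.
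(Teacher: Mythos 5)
Your proposal is correct and follows essentially the same route as the paper's proof: restrict the bijection $(\mathscr H,\mathscr T)\mapsto g(\mathscr H,\mathscr T)$ of Theorem \ref{Thm1} to pairs with $\mathscr H\in\mathcal H_S(\pi)$ and each $T_t$ a decreasing $S$-tree, using the observation that vertex degrees in $\tau$ are $w_j(\mathscr H)$ at points of $NE(\mathscr H)$ and are inherited from the $T_t$ elsewhere. Your treatment is in fact slightly more careful than the paper's sketch, since you explicitly verify the biconditional and dispose of the $NE(\mathscr H)$ singleton case via Remark \ref{Rem1}.
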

\begin{proof}
The proof is very similar to those of Theorems \ref{Thm1} and \ref{Thm2} because a decreasing $S$-tree is just a special type of decreasing $\mathbb N$-tree. As mentioned in the paragraph preceding this theorem, each tree $\tau$ with postorder $\pi$ corresponds to a pair $(\mathscr H,\mathscr T)$. If $(j,\pi_j)\in NE(\mathscr H)$, then the number of children of $\pi_j$ in $\tau$ is $w_{j}(\mathscr H)$. This is the reason for summing over the set $\mathcal H_S(\pi)$. Because the number of children of each of the other vertices of $\tau$ must be an element of $S$, the tuple of trees $\mathscr T=(T_0,T_1,\ldots,T_m)$ must be such that each tree $T_t$ is a decreasing $S$-tree whose postorder is in increasing order. This explains why we have replaced the product $\displaystyle{\prod_{t=0}^{\vert\mathscr H\vert}C_{q_t(\mathscr H)-1}}$ from Theorem \ref{Thm2} with the product $\displaystyle{\prod_{t=0}^{\vert\mathscr H\vert}D_S(q_t(\mathscr H))}$.   
\end{proof}

We can also modify our techniques in order to count decreasing $d$-ary trees. 

\begin{theorem}\label{Thm4}
Let $d$ be a positive integer. Let $E_d(r)$ denote the number of $d$-ary plane trees with $r$ vertices. The number of decreasing $d$-ary plane trees with postorder $\pi$ is \[\sum_{\mathscr H\in\mathcal H_{[d]\cup\{0\}}(\pi)}\left(\prod_{j=1}^n{d\choose w_{j}(\mathscr H)}\right)\left(\prod_{t=0}^{\vert \mathscr H\vert}E_d(q_t(\mathscr H))\right).\]
\end{theorem}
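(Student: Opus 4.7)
The plan is to adapt the bijection from Theorems \ref{Thm1} and \ref{Thm2}, making two modifications appropriate to the $d$-ary setting. First, each auxiliary tree $T_t$ in the tuple $\mathscr T$ will now be required to be a decreasing $d$-ary plane tree on $Q_t(\mathscr H)$ with increasing postorder; since the labeling is forced by the postorder condition, the number of such trees is $E_d(q_t(\mathscr H))$. Second, because every vertex of a $d$-ary tree has $d$ labeled child slots, for each vertex we must additionally record which slots are occupied---extra data absent from the $\mathbb N$-tree version.

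As observed just before Theorem \ref{Thm3}, in an output tree $\tau$ a vertex $\pi_j$ with $(j,\pi_j)\in NE(\mathscr H)$ has exactly $w_j(\mathscr H)$ children, all coming from Case 1 of the procedure, while a vertex $\pi_j\in Q_t(\mathscr H)$ with $(j,\pi_j)\notin NE(\mathscr H)$ has the same children in $\tau$ as in $T_t$. Hence the constraint that every vertex of $\tau$ have at most $d$ children splits into two conditions: $w_j(\mathscr H)\le d$ for every $j$, equivalent to $\mathscr H\in\mathcal H_{[d]\cup\{0\}}(\pi)$, together with the requirement that each $T_t$ be $d$-ary. To enrich the procedure, for each $j$ with $w_j(\mathscr H)>0$ I would choose a subset $\Pi_j\subseteq[d]$ of size $w_j(\mathscr H)$ recording the occupied slots. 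Then modify Case 1 as follows: if $H_i$ is the $m$th hook (ordered by the $x$-coordinate of its southwest endpoint) among hooks with northeast endpoint $(j,\pi_j)$, and ${}_eH_i=(\ell,\pi_\ell)$, attach $\pi_\ell$ to $\pi_j$ in the $m$th-smallest slot of $\Pi_j$. The number of choices of $\Pi_j$ is $\binom{d}{w_j(\mathscr H)}$, and since $\binom{d}{0}=1$ absorbs all $j$ with $w_j(\mathscr H)=0$, we may freely extend the product to all of $[n]$, giving the factor $\prod_{j=1}^n\binom{d}{w_j(\mathscr H)}$.

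The conclusion of Proposition \ref{Prop1} is unaffected by the slot assignments, since its proof depends only on the fact that $\pi_\ell$ is attached as a leaf to a vertex whose descendants in $\tau_{\ell+1}$ include $\pi_{\ell+1}$. The uniqueness argument of Theorem \ref{Thm1} carries over verbatim, with $\Pi_j$ additionally reconstructed from the child-slot positions in $\tau$ and each $T_t$ read off from the residual parent-child structure on $Q_t(\mathscr H)$. The step that warrants the most care is verifying that the internal left-to-right order of the children of $\pi_j$ contributed by hooks is forced by the hook ordering, so that only the choice of subset $\Pi_j$ (and not a full bijection from hooks to slots) is free; this is guaranteed by the $m$th-hook-to-$m$th-smallest-slot convention. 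Once this is checked, the enumeration follows by multiplying the independent counts for $\mathscr H$, the subsets $\Pi_j$, and the trees $T_t$, and summing over $\mathscr H\in\mathcal H_{[d]\cup\{0\}}(\pi)$.
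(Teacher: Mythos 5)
Your proof is correct and follows essentially the same route as the paper's: both modify the $\mathbb N$-tree bijection by requiring each $T_t$ to be a $d$-ary tree (giving the factors $E_d(q_t(\mathscr H))$) and by choosing, for each $(j,\pi_j)\in NE(\mathscr H)$, which $w_{j}(\mathscr H)$ of the $d$ child slots are occupied (giving the factors ${d\choose w_{j}(\mathscr H)}$). Your explicit slot-assignment convention simply spells out what the paper's shorter proof leaves implicit.
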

\begin{proof}
We have made two simple modifications to the ideas used in the proof of Theorem \ref{Thm3}. The first is that we have replaced $D_S(q_t(\mathscr H))$ with $E_d(q_t(\mathscr H))$ because we are considering different types of trees. The second is the introduction of the factors of the form $\displaystyle{\prod_{j=1}^n{d\choose w_{j}(\mathscr H)}}$. The reason for these new factors comes from the fact that vertices in decreasing $d$-ary trees can have empty subtrees. For each $(j,\pi_j)\in NE(\mathscr H)$, there are $\displaystyle{{d\choose w_{j}(\mathscr H)}}$ ways to choose which of the $d$ subtrees of $\pi_j$ are nonempty. When $(j,\pi_j)\not\in NE(\mathscr H)$, the factor $\displaystyle{{d\choose w_{j}(\mathscr H)}}$ does not change anything because $w_{j}(\mathscr H)=0$.   
\end{proof}

The following theorems serve to refine the enumerative results we have found so far. We omit their proofs because they are straightforward consequences of ideas that we have already used, especially the observations mentioned in the paragraph immediately preceding Theorem \ref{Thm3}. We need some notation before stating these results. If $R$ is a set of nonnegative integers, let $\Phi_R(u)$ denote the set of all decreasing plane trees (of either of our two types) in which there are exactly $u$ vertices $v$ such that the number of children of $v$ is an element of $R$. If $\mathscr H$ is a valid hook configuration of $\pi$, let $\Theta(\mathscr H)$ be the set of all $i\in\{0,1,\ldots,\vert\mathscr H\vert\}$ such that the color $c_i$ is not used in the colored diagram induced by $\mathscr H$ to color a point in $NE(\mathscr H)$. We let $\widehat{\vert\mathscr H\vert}=\vert\Theta(\mathscr H)\vert-1=\vert \mathscr H\vert-\vert NE(\mathscr H)\vert$. For example, the colors $c_6$ and $c_7$ (pink and teal) are used in Example \ref{Exam2} to colors the points in $NE(\mathscr H)$. Therefore, in that example, we have $\Theta(\mathscr H)=\{0,1,2,3,4,5\}$ and $\widehat{\vert\mathscr H\vert}=5$. 
If $\Theta(\mathscr H)=\left\{i_0,i_1,\ldots,i_{\widehat{\vert\mathscr H\vert}}\right\}$, where $i_0<i_1<\cdots<i_{\widehat{\vert\mathscr H\vert}}$, then we let $\widehat q_t(\mathscr H)=q_{i_t}(\mathscr H)$. In other words, the tuple $\left(\widehat q_0(\mathscr H),\widehat q_1(\mathscr H),\ldots,\widehat q_{\widehat{\vert \mathscr H\vert}}(\mathscr H)\right)$ is obtained by starting with the tuple $\left(q_0(\mathscr H),q_1(\mathscr H),\ldots,q_{\vert\mathscr H\vert}(\mathscr H)\right)$
and removing all of the coordinates $q_i(\mathscr H)$ such that the color $c_i$ is used to color a point in $NE(\mathscr H)$. 

\begin{theorem}\label{Thm5}
Let $R\subseteq S\subseteq\mathbb N$ with $0\in S$. Let $\mathcal H_S(\pi;R,u)$ be the set of $\mathscr H\in\mathcal H_S(\pi)$ such that $\vert\{(j,\pi_j)\in NE(\mathscr H)\colon w_{j}(\mathscr H)\in R\}\vert=u$. Let $D_S(r;R,u)$ be the number of $S$-trees in $\Phi_R(u)$ with $r$ vertices. The number of decreasing $S$-trees $\tau\in\Phi_R(p)$ with $P(\tau)=\pi$ is 
\[\sum_{u=0}^p\sum_{\mathscr H\in\mathcal H_S(\pi;R,u)}\,\sum_{j_0+j_1+\cdots+j_{\widehat{\vert\mathscr H\vert}}=p-u}\,\prod_{t=0}^{\widehat{\vert\mathscr H\vert}}D_S(\widehat q_t(\mathscr H);R,j_t),\] where the innermost sum ranges over all nonnegative integers $j_0,j_1,\ldots,j_{\widehat{\vert\mathscr H\vert}}$ that sum to $p-u$.   
\end{theorem}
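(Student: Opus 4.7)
The plan is to leverage the bijective correspondence $\tau \leftrightarrow (\mathscr{H}, \mathscr{T})$ from Theorem \ref{Thm1} in the form adapted to $S$-trees in Theorem \ref{Thm3}, and then carefully track which vertices of $\tau$ have a number of children in $R$. The key structural fact, already highlighted in the paragraph preceding Theorem \ref{Thm3}, is that in the decreasing $S$-tree $\tau$ corresponding to $(\mathscr{H}, \mathscr{T})$, a vertex $\pi_j$ has exactly $w_j(\mathscr{H})$ children when $(j,\pi_j) \in NE(\mathscr{H})$, and otherwise $\pi_j$ has the same number of children in $\tau$ as it does in the unique tree $T_t$ containing it. This dichotomy is what drives the refinement.

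Given $\tau \in \Phi_R(p)$ with $P(\tau) = \pi$, I would split the $p$ vertices of $\tau$ whose child count lies in $R$ into those in $NE(\mathscr{H})$ and those not in $NE(\mathscr{H})$. Writing $u$ for the size of the first group, the condition on this group is exactly that $\mathscr{H} \in \mathcal{H}_S(\pi; R, u)$, which is why the outer sum ranges over this set. By Remark \ref{Rem1}, every color class $Q_t(\mathscr{H})$ whose color $c_t$ is used on an $NE$-point is a singleton consisting of precisely that $NE$-point; consequently, \emph{all} non-$NE$ vertices of $\tau$ are distributed among the trees $T_t$ indexed by $t \in \Theta(\mathscr{H})$, whose sizes are exactly $\widehat{q}_0(\mathscr{H}), \widehat{q}_1(\mathscr{H}), \ldots, \widehat{q}_{\widehat{|\mathscr{H}|}}(\mathscr{H})$. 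Letting $j_t$ denote the number of vertices in the $t$-th such tree having child count in $R$, we need $j_0 + j_1 + \cdots + j_{\widehat{|\mathscr{H}|}} = p - u$.

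With $\mathscr{H}$ and the composition $(j_0,\ldots,j_{\widehat{|\mathscr{H}|}})$ fixed, choosing $\mathscr{T}$ factors across indices. For $t \notin \Theta(\mathscr{H})$ the tree $T_t$ is forced to be a single vertex, so there is only one choice. For $t \in \Theta(\mathscr{H})$, we must pick a decreasing $S$-tree on $Q_t(\mathscr{H})$ whose postorder is in increasing order and which has exactly $j_t$ vertices with child count in $R$; since the increasing-postorder requirement pins down the labeling, the number of choices is precisely the number of unlabeled $S$-trees on $\widehat{q}_t(\mathscr{H})$ vertices belonging to $\Phi_R(j_t)$, namely $D_S(\widehat{q}_t(\mathscr{H}); R, j_t)$. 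Multiplying across $t$ and summing over $u$, $\mathscr{H}$, and compositions yields the stated formula.

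The main obstacle, modest as it is, is the bookkeeping around $\Theta(\mathscr{H})$ and $\widehat{q}_t(\mathscr{H})$: one must be sure that the singleton trees $T_t$ with $t \notin \Theta(\mathscr{H})$ are not allowed to contribute to the $R$-count (the child count of their lone vertex in $\tau$ is $w_j(\mathscr{H})$, not $0$, and this vertex is already tallied via the condition $\mathscr{H} \in \mathcal{H}_S(\pi; R, u)$). Once this point is dispatched via Remark \ref{Rem1}, the rest of the argument is a direct refinement of the proof of Theorem \ref{Thm3}, so the author's decision to omit the full details is entirely reasonable.
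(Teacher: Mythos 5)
Your proof is correct and follows exactly the route the paper intends: the paper omits the proof of Theorem~\ref{Thm5}, pointing only to the bijection $\tau\leftrightarrow(\mathscr H,\mathscr T)$ and the observations in the paragraph preceding Theorem~\ref{Thm3}, and your writeup is precisely the straightforward refinement of the proof of Theorem~\ref{Thm3} that this indicates. Your attention to the singleton trees $T_t$ with $t\notin\Theta(\mathscr H)$ (whose lone vertex has $w_j(\mathscr H)$ children in $\tau$ rather than $0$, and is already counted in $u$) correctly dispatches the one point where the bookkeeping could go wrong.
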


\begin{theorem}\label{Thm6}
Let $R\subseteq [d]\cup\{0\}$. Let $\mathcal H_{[d]\cup\{0\}}(\pi;R,u)$ be the set of $\mathscr H\in\mathcal H_{[d]\cup\{0\}}(\pi)$ such that $\vert\{(j,\pi_j)\in NE(\mathscr H)\colon w_{j}(\mathscr H)\in R\}\vert=u$. Let $E_d(r;R,u)$ be the number of $d$-ary plane trees in $\Phi_R(u)$ with $r$ vertices. The number of decreasing $d$-ary plane trees $\tau\in\Phi_R(p)$ with $P(\tau)=\pi$ is 
\[\sum_{u=0}^p\sum_{\mathscr H\in\mathcal H_{[d]\cup\{0\}}(\pi;R,u)}\left(\prod_{j=1}^n{d\choose w_{j}(\mathscr H)}\right)\sum_{j_0+j_1+\cdots+j_{\widehat{\vert\mathscr H\vert}}=p-u}\prod_{t=0}^{\widehat{\vert\mathscr H\vert}}E_d(\widehat q_t(\mathscr H);R,j_t),\] where the innermost sum ranges over all nonnegative integers $j_0,j_1,\ldots,j_{\widehat{\vert\mathscr H\vert}}$ which sum to $p-u$.   
\end{theorem}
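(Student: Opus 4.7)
The plan is to follow exactly the bijective framework used in the proofs of Theorems \ref{Thm1} and \ref{Thm4}, tracking one additional statistic throughout. The bijection extended to decreasing $d$-ary plane trees sends each such $\tau$ with $P(\tau)=\pi$ to a triple $(\mathscr H,\mathscr P,\mathscr T)$, where $\mathscr H\in\mathcal H_{[d]\cup\{0\}}(\pi)$, $\mathscr P$ records, for each $(j,\pi_j)\in NE(\mathscr H)$, which $w_{j}(\mathscr H)$ of the $d$ subtree slots of $\pi_j$ are non-empty in $\tau$, and $\mathscr T=(T_0,\ldots,T_{\vert\mathscr H\vert})$ is a tuple of decreasing $d$-ary plane trees on the color classes $Q_t(\mathscr H)$ with increasing postorders. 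By Remark \ref{Rem1}, every color $c_t$ used on a point of $NE(\mathscr H)$ yields a singleton color class, so the corresponding $T_t$ is trivial; the combinatorial content of $\mathscr T$ is carried by the trees $T_{i_t}$ indexed by $i_t\in\Theta(\mathscr H)$, which have sizes $\widehat q_0(\mathscr H),\ldots,\widehat q_{\widehat{\vert\mathscr H\vert}}(\mathscr H)$.

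Next I would verify a clean decomposition of the refined statistic. For each vertex $\pi_j$ of $\tau$, the number of non-empty children of $\pi_j$ in $\tau$ equals $w_j(\mathscr H)$ when $(j,\pi_j)\in NE(\mathscr H)$, and otherwise equals the number of children of $\pi_j$ in the unique tree $T_t$ containing it. The ``otherwise'' case is exactly the embedding observation highlighted in the paragraph preceding Theorem \ref{Thm3}: for a non-$NE$ vertex $\pi_j$, its children in $\tau$ coincide with its children in $T_t$ and occupy the same slots among the $d$ positions. Consequently, the total number of vertices of $\tau$ whose non-empty child-count lies in $R$ splits without overlap as $u+j_0+j_1+\cdots+j_{\widehat{\vert\mathscr H\vert}}$, where $u=\vert\{(j,\pi_j)\in NE(\mathscr H):w_j(\mathscr H)\in R\}\vert$ is exactly the statistic used to define $\mathcal H_{[d]\cup\{0\}}(\pi;R,u)$, and $j_t$ is the number of vertices of $T_{i_t}$ whose child-count lies in $R$.

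The final step is to enumerate. Fixing $\tau\in\Phi_R(p)$ with $P(\tau)=\pi$ is equivalent to choosing $u\in\{0,\ldots,p\}$, then choosing $\mathscr H\in\mathcal H_{[d]\cup\{0\}}(\pi;R,u)$, then choosing a composition $(j_0,\ldots,j_{\widehat{\vert\mathscr H\vert}})$ of $p-u$, then choosing the positional data $\mathscr P$, which contributes $\prod_{j=1}^n\binom{d}{w_j(\mathscr H)}$ (the factor collapses to $1$ at every $j$ with $(j,\pi_j)\notin NE(\mathscr H)$, since then $w_j(\mathscr H)=0$), and finally choosing each $T_{i_t}$ among $E_d(\widehat q_t(\mathscr H);R,j_t)$ possibilities. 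Assembling these factors reproduces the displayed formula.

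The main obstacle is purely the book-keeping of switching from $q_t$ to $\widehat q_t$: one must be careful not to double-count the NE endpoints, whose child-count contributions are already captured by $u$ through the restriction $\mathscr H\in\mathcal H_{[d]\cup\{0\}}(\pi;R,u)$, and whose singleton color classes carry no further information. Once the decomposition ``$NE$-contribution'' + ``$T_{i_t}$-contributions'' is made precise by means of Remark \ref{Rem1} and the embedding observation, the result is a direct refinement of Theorem \ref{Thm4} and no new ideas are required.
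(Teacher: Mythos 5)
Your argument is correct and follows exactly the route the paper indicates for this result: the paper omits the proof of Theorem \ref{Thm6}, remarking only that it follows from the bijection of Theorems \ref{Thm1} and \ref{Thm4} together with the observations in the paragraph preceding Theorem \ref{Thm3}, and your write-up is precisely that argument made explicit (including the key point, via Remark \ref{Rem1}, that the $NE(\mathscr H)$ vertices are accounted for by $u$ and their singleton color classes contribute nothing further, which is why the product runs over $\widehat q_t$ rather than $q_t$). No gaps; this matches the intended proof.
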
 

It may seem as though Theorems \ref{Thm5} and \ref{Thm6} provide unnecessarily lengthy expressions by merely summing over appropriate sets. As these theorems stand, they appear too complicated to be of much use. However, our real interest lies in some special cases in which the given expressions simplify considerably. 

Recall from Definition \ref{Def1} that if $\mathscr H\in\mathcal H(\pi)$ and $(j,\pi_j)\in NE(\mathscr H)$, then $(j,\pi_j)$ is the northeast endpoint of at least two hooks: one with southwest endpoint $(j-1,\pi_{j-1})$ and one with southwest endpoint $(i,\pi_i)$ for some descent $i$ of $\pi$. It is natural to consider the set of valid hook configurations $\mathscr H$ in which each point in $NE(\mathscr H)$ is a northeast endpoint of \emph{exactly} two hooks. In the notation of Definition \ref{Def3}, this is the set $\mathcal H_{\{0,2\}}(\pi)$. It is particularly easy to work with this set of valid hook configurations because the number of hooks in any such configuration must be twice the number of descents of $\pi$.  

The following corollary invokes the numbers $D_{\{0,1,2\}}(r;R,u)$ for certain sets $R\subseteq\{0,1,2\}$. These numbers are known\footnote{See sequences A000108, A001006, A055151, A097610, A091894, A121448 in the Online Encyclopedia of Integer Sequences \cite{OEIS}.} and are listed in Table \ref{Tab1}. 

\begin{table}[h]

\begin{tabu}{|c|[1.5pt]c|c|}

    \hline $R$ & $D_{\{0,1,2\}}(r;R,u)$ & $E_2(r;R,u)$ \\\tabucline[2pt]{-}
    $\{0\}$ & $\frac{1}{u}{r-1\choose u-1}{r-u\choose u-1}$ & $2^{r-2u+1}{r-1\choose 2u-2}C_{u-1}$ \\\hline
    $\{1\}$ & ${r-1\choose u}C_{(r-u-1)/2}$ & $2^u{r-1\choose u}C_{(r-u-1)/2}$ \\\hline
    $\{2\}$ & $\frac{1}{u+1}{r-1\choose u}{r-u-1\choose u}$ & $2^{r-2u-1}{r-1\choose 2u}C_{u}$ \\\hline
    $\{0,1,2\}$ & $M_{r-1}\delta_{r,u}$ & $C_r\delta_{r,u}$ \\\hline
\end{tabu}\vspace{.3cm}
\caption{Values of $D_{\{0,1,2\}}(r;R,u)$ and $E_2(r;R,u)$ for $R\subseteq\{0,1,2\}$. Here, $M_{r-1}$ denotes the $(r-1)^{\text{th}}$ Motzkin number. We convene to let $C_x=0$ if $x\not\in\mathbb Z$. In the last row, $\delta_{r,u}$ is the Kronecker delta. Observe that if $R=\{0,1,2\}\setminus\{a\}$ for some $a\in\{0,1,2\}$, then $D_{\{0,1,2\}}(r;R,u)=D_{\{0,1,2\}}(r;\{a\},r-u)$ and $E_2(r;R,u)=E_2(r;\{a\},r-u)$.}\label{Tab1}
\end{table}

\begin{corollary}\label{Cor1}
Let $\pi\in S_n$ be a permutation with exactly $k$ descents. Let $R\subseteq \{0,1,2\}$. Let $\chi=k$ if $2\in R$, and let $\chi=0$ if $2\not\in R$. The number of decreasing $\{0,1,2\}$-trees $\tau\in\Phi_R(p)$ with $P(\tau)=\pi$ is \[\sum_{\substack{\mathscr H\in\mathcal H_{\{0,2\}}(\pi)\\j_0+j_1+\cdots+j_k=p-\chi}}\prod_{t=0}^k D_{\{0,1,2\}}(\widehat q_t(\mathscr H);R,j_t),\]
where the numbers $j_0,j_1,\ldots,j_k$ are assumed to be nonnegative integers. 
\end{corollary}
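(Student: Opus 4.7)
The plan is to derive Corollary \ref{Cor1} by specializing Theorem \ref{Thm5} to $S=\{0,1,2\}$ and collapsing the resulting expression using two structural observations about hook configurations.

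The first observation is that $\mathcal H_{\{0,1,2\}}(\pi)=\mathcal H_{\{0,2\}}(\pi)$, i.e.\ no northeast endpoint can carry exactly one hook. Indeed, if $(j,\pi_j)\in NE(\mathscr H)$, Definition \ref{Def1}(c) requires a hook whose southwest endpoint is $(j-1,\pi_{j-1})$. Since a hook from $(j-1,\pi_{j-1})$ to $(j,\pi_j)$ exists only when $\pi_{j-1}<\pi_j$, this forces $j-1$ not to be a descent. But Definition \ref{Def1}(c) also demands a hook ending at $(j,\pi_j)$ whose southwest endpoint lies at a descent of $\pi$, and this descent must then differ from $j-1$; the two hooks are therefore distinct, giving $w_j(\mathscr H)\ge 2$. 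Consequently the sum over $\mathcal H_{\{0,1,2\}}(\pi;R,u)$ in Theorem \ref{Thm5} is identically the sum over $\mathcal H_{\{0,2\}}(\pi;R,u)$.

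The second observation identifies the parameters associated with any $\mathscr H\in\mathcal H_{\{0,2\}}(\pi)$. By Definition \ref{Def1}(a)--(b), every descent of $\pi$ is the southwest endpoint of exactly one hook of $\mathscr H$; together with the first observation, the map sending a descent $i$ to the northeast endpoint of its hook is a bijection from the descent set of $\pi$ onto $NE(\mathscr H)$. Thus $|NE(\mathscr H)|=k$, and since $w_j(\mathscr H)=2$ at each of these points, $|\mathscr H|=2k$ and $\widehat{|\mathscr H|}=|\mathscr H|-|NE(\mathscr H)|=k$. Moreover, the defining parameter $u=|\{(j,\pi_j)\in NE(\mathscr H):w_j(\mathscr H)\in R\}|$ from Theorem \ref{Thm5} equals $k$ when $2\in R$ and $0$ when $2\notin R$; in either case $u=\chi$, so the outer sum over $u$ collapses to the single contributing term $u=\chi$ and the condition $\mathcal H_{\{0,2\}}(\pi;R,\chi)=\mathcal H_{\{0,2\}}(\pi)$ becomes vacuous.

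Substituting $\widehat{|\mathscr H|}=k$, $u=\chi$, $p-u=p-\chi$, and $D_S=D_{\{0,1,2\}}$ into the formula of Theorem \ref{Thm5} produces precisely the expression claimed in Corollary \ref{Cor1}. The only nontrivial step is the first observation—that condition (c) of Definition \ref{Def1} forbids the value $w_j=1$—and this is a short structural argument rather than a serious obstacle.
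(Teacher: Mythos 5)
Your proposal is correct and follows essentially the same route as the paper: specialize Theorem \ref{Thm5} to $S=\{0,1,2\}$, observe that $\mathcal H_{\{0,1,2\}}(\pi)=\mathcal H_{\{0,2\}}(\pi)$ since no point can be the northeast endpoint of exactly one hook, deduce $\vert NE(\mathscr H)\vert=k$ and $\widehat{\vert\mathscr H\vert}=k$, and collapse the sum over $u$ to the single value $u=\chi$. The only difference is that you spell out the short argument for $w_j(\mathscr H)\neq 1$ (the $(j-1)$-hook forces $\pi_{j-1}<\pi_j$, so $j-1$ is not a descent and the two required hooks are distinct), which the paper merely asserts.
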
 
\begin{proof}
Set $S=\{0,1,2\}$ in Theorem \ref{Thm5}. Note that $\mathcal H_{\{0,1,2\}}(\pi)=\mathcal H_{\{0,2\}}(\pi)$ because a point cannot be the northeast endpoint of exactly one hook. Suppose $\mathscr H\in\mathcal H_{\{0,2\}}(\pi)$. Each point in $NE(\mathscr H)$ is the northeast endpoint of two hooks, exactly one of which has a southwest endpoint $(i,\pi_i)$ for some descent $i$ of $\pi$. Therefore, $\vert NE(\mathscr H)\vert=k$ and $\vert\mathscr H\vert=2k$. By definition, $\widehat{\vert\mathscr H\vert}=\vert\mathscr H\vert-\vert NE(\mathscr H)\vert=k$. Because $w_{j}(\mathscr H)=2$ whenever $(j,\pi_j)\in NE(\mathscr H)$, $\mathcal H_{\{0,1,2\}}(\pi;R,u)$ is empty if $u\neq 0$ and $2\not\in R$. Similarly, $\mathcal H_{\{0,1,2\}}(\pi;R,u)$ is empty when $u\neq k$ and $2\in R$. When $u=0$ and $2\not\in R$, $\mathcal H_{\{0,1,2\}}(\pi;R,u)=\mathcal H_{\{0,2\}}(\pi)$. When $u=k$ and $2\in R$, $\mathcal H_{\{0,1,2\}}(\pi;R,u)=\mathcal H_{\{0,2\}}(\pi)$.  
\end{proof}
\begin{example}\label{Exam4}
By definition, $\Phi_{\{0\}}(\ell)$ is the set of all decreasing plane trees with $\ell$ leaves. According to Table \ref{Tab1}, $D_{\{0,1,2\}}(r;\{0\},u)=\frac{1}{u}{r-1\choose u-1}{r-u\choose u-1}$ if $u>0$. Therefore, Corollary \ref{Cor1} tells us that if $\pi\in S_n$ has exactly $k$ descents, then the number of decreasing $\{0,1,2\}$-trees with exactly $p$ leaves that have postorder $\pi$ is \[\sum_{\substack{\mathscr H\in\mathcal H_{\{0,2\}}(\pi)\\j_0+j_1+\cdots+j_k=p}}\prod_{t=0}^k \frac{1}{j_t}{\widehat q_t(\mathscr H)-1\choose j_t-1}{\widehat q_t(\mathscr H)-j_t\choose j_t-1}.\] It is understood that the numbers $j_0,j_1,\ldots,j_k$ in the sum are positive. 
\end{example}

In the next corollary, recall the definition of the numbers $E_d(r;R,u)$ from Theorem \ref{Thm6}. When $d=2$ and $R\subseteq\{0,1,2\}$, these numbers are given in Table \ref{Tab1}. 

\begin{corollary}\label{Cor2}
Let $\pi\in S_n$ be a permutation with exactly $k$ descents. Let $R\subseteq \{0,1,2\}$. Let $\chi=k$ if $2\in R$, and let $\chi=0$ if $2\not\in R$. The number of decreasing binary plane trees $\tau\in\Phi_R(p)$ with $P(\tau)=\pi$ is \[\sum_{\substack{\mathscr H\in\mathcal H_{\{0,2\}}(\pi)\\j_0+j_1+\cdots+j_k=p-\chi}}\prod_{t=0}^k E_2(\widehat q_t(\mathscr H);R,j_t),\]
where the numbers $j_0,j_1,\ldots,j_k$ are assumed to be nonnegative integers. 
\end{corollary}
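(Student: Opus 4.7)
The plan is to specialize Theorem \ref{Thm6} to $d=2$ and observe that several ingredients collapse, in direct parallel to the proof of Corollary \ref{Cor1}. Setting $d=2$ gives $[d]\cup\{0\}=\{0,1,2\}$, so the outer sum ranges over $\mathcal H_{\{0,1,2\}}(\pi;R,u)$. First I would note that, by part (c) of Definition \ref{Def1}, every point in $NE(\mathscr H)$ is the northeast endpoint of at least two hooks, hence $w_j(\mathscr H)\neq 1$ for all $j$; this forces $\mathcal H_{\{0,1,2\}}(\pi)=\mathcal H_{\{0,2\}}(\pi)$, matching the index set in the statement.

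Next I would dispense with the binomial factors. For $\mathscr H\in\mathcal H_{\{0,2\}}(\pi)$ and any $j\in[n]$, we have $w_j(\mathscr H)\in\{0,2\}$, so $\binom{2}{w_j(\mathscr H)}=1$; therefore
\[\prod_{j=1}^n\binom{d}{w_j(\mathscr H)}=1,\]
and this entire product disappears from the formula. I would then count the hooks: since each descent of $\pi$ is the southwest endpoint of a unique hook in $\mathscr H$, and each point in $NE(\mathscr H)$ is paired with exactly one such descent-hook together with the hook based at $(j-1,\pi_{j-1})$, we obtain $|NE(\mathscr H)|=k$ and $|\mathscr H|=2k$, so $\widehat{|\mathscr H|}=|\mathscr H|-|NE(\mathscr H)|=k$.

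Finally I would collapse the sum over $u$. Since $w_j(\mathscr H)=2$ for every $(j,\pi_j)\in NE(\mathscr H)$, the value
\[u=|\{(j,\pi_j)\in NE(\mathscr H):w_j(\mathscr H)\in R\}|\]
equals $k$ if $2\in R$ and equals $0$ if $2\notin R$; all other values of $u$ give $\mathcal H_{\{0,1,2\}}(\pi;R,u)=\varnothing$. In either case the surviving $u$ is precisely $\chi$, so $p-u=p-\chi$ and $\mathcal H_{\{0,1,2\}}(\pi;R,\chi)=\mathcal H_{\{0,2\}}(\pi)$. Substituting these reductions into the formula of Theorem \ref{Thm6} yields exactly the claimed expression.

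The argument is essentially a bookkeeping exercise; the only step that requires any thought is recognizing that $\binom{2}{w_j(\mathscr H)}=1$ on the nose for every $j$, which is what makes the binary case so much cleaner than the general $d$-ary case and what allows the result to parallel the $\{0,1,2\}$-tree corollary so precisely.
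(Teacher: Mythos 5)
Your proposal is correct and follows essentially the same route as the paper: specialize Theorem \ref{Thm6} to $d=2$, identify $\mathcal H_{\{0,1,2\}}(\pi)$ with $\mathcal H_{\{0,2\}}(\pi)$, observe that $\binom{2}{w_j(\mathscr H)}=1$ since $w_j(\mathscr H)\in\{0,2\}$, and collapse the sum over $u$ to the single value $\chi$ exactly as in the proof of Corollary \ref{Cor1}. The paper's own proof is terser (it simply defers to the argument of Corollary \ref{Cor1} plus the binomial observation), but the content is identical.
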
 
\begin{proof}
Set $d=2$ in Theorem \ref{Thm5}. The simplification of the formula in Theorem \ref{Thm6} results from the same argument used to simplify the formula from Theorem \ref{Thm5} in the proof of Corollary \ref{Cor1}. In this case, we must also consider the expression $\displaystyle{\prod_{j=1}^n{d\choose w_{j}(\mathscr H)}}$. This product simplifies to $1$ because $d=2$ and $w_{j}\in\{0,2\}$ for all $j\in[n]$ and $\mathscr H\in\mathcal H_{\{0,2\}}(\pi)$. 
\end{proof}
\begin{example}\label{Exam5}
According to Table \ref{Tab1}, $E_2(r;\{0\},u)=2^{r-2u+1}{r-1\choose 2u-2}C_{u-1}$ if $u>0$. Therefore, Corollary \ref{Cor1} tells us that if $\pi\in S_n$ has exactly $k$ descents, then the number of decreasing  binary plane trees with exactly $p$ leaves that have postorder $\pi$ is \[\sum_{\substack{\mathscr H\in\mathcal H_{\{0,2\}}(\pi)\\j_0+j_1+\cdots+j_k=p}}\prod_{t=0}^k 2^{\widehat q_t(\mathscr H)-2j_t+1}{\widehat q_t(\mathscr H)-1\choose 2j_t-2}C_{j_t-1}.\] Because $\widehat q_0(\mathscr H)+\widehat q_1(\mathscr H)+\cdots+\widehat q_k(\mathscr H)=n-k$, this expression simplifies further to  \[2^{n-2p+1}\sum_{\substack{\mathscr H\in\mathcal H_{\{0,2\}}(\pi)\\j_0+j_1+\cdots+j_k=p}}\prod_{t=0}^k {\widehat q_t(\mathscr H)-1\choose 2j_t-2}C_{j_t-1}.\] 
\end{example}

\section{The Stack-Sorting Algorithm}

In his 1990 Ph.D. thesis, Julian West \cite{West90} studied a function $s$ that transforms permutations into permutations through the use of a vertical stack. We call the function $s$ the \emph{deterministic stack-sorting algorithm}. Given an input permutation $\sigma=\sigma_1\sigma_2\cdots\sigma_n$, the permutation $s(\sigma)$ is computed as follows. At any point in time during the algorithm, if the leftmost entry in the input permutation is larger than the entry at the top of the stack or if the stack is empty, the leftmost entry in the input permutation is placed at the top of the stack. Otherwise, the entry at the top of the stack is annexed to the right end of the growing output permutation. For example, $s(35214)=31245$. 

Example \ref{Exam5} has a natural interpretation in terms of the deterministic stack-sorting algorithm. This is because the map $s$ is intimately related to the postorder readings of decreasing binary plane trees. If we are given any decreasing binary plane tree, we can read its labels in \emph{symmetric order} by first reading the left subtree of the root in symmetric order, then reading the root, and finally reading the right subtree of the root in symmetric order. If we let $\mathcal D_n$ denote the set of decreasing binary plane trees on $[n]$, then the map $S\colon\mathcal D_n\to S_n$ that sends a tree to its symmetric order reading is a bijection \cite{Stanley}. Furthermore, one can show that $s(\sigma)=P(S^{-1}(\sigma))$ for any $\sigma\in S_n$ \cite[Corollary 8.22]{Bona}. Therefore, the preimages of a permutation $\pi\in S_n$ under $s$ are in bijective correspondence with the decreasing binary plane trees with postorder $\pi$ (the bijection being the map $S$). 

A valley of a permutation $\sigma_1\sigma_2\cdots\sigma_n$ is an index $i\in\{1,2,\ldots,n\}$ such that $\sigma_i<\min\{\sigma_{i-1},\sigma_{i+1}\}$, where we make the convention $\sigma_0=\sigma_{n+1}=\infty$. A leaf of a decreasing binary plane tree corresponds to a valley in the tree's symmetric order reading. Therefore, we may rephrase the result from Example \ref{Exam5} as follows: 

\begin{corollary}\label{Cor3}
If $\pi\in S_n$ is a permutation with exactly $k$ descents, then the number of permutations $\sigma\in S_n$ that have exactly $p$ valleys and that satisfy $s(\sigma)=\pi$ is given by
\[2^{n-2p+1}\sum_{\substack{\mathscr H\in\mathcal H_{\{0,2\}}(\pi)\\j_0+j_1+\cdots+j_k=p}}\prod_{t=0}^k {\widehat q_t(\mathscr H)-1\choose 2j_t-2}C_{j_t-1}.\] 
\end{corollary}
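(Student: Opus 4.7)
The plan is to translate Example~\ref{Exam5} into the language of the stack-sorting map. The key ingredients, both recalled just before the corollary, are the symmetric-order bijection $S\colon\mathcal{D}_n\to S_n$ and the identity $s(\sigma)=P(S^{-1}(\sigma))$. Together they yield a bijection $\sigma\mapsto S^{-1}(\sigma)$ from $s^{-1}(\pi)$ onto the set of decreasing binary plane trees on $[n]$ whose postorder is $\pi$, so the corollary reduces to showing that under this bijection the valley statistic on $\sigma$ is carried to the leaf statistic on $S^{-1}(\sigma)$.

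To establish the leaf-valley correspondence, I would fix a vertex $v$ of $\tau\in\mathcal{D}_n$ and let $i$ be its position in $\sigma=S(\tau)$. Because $\tau$ is decreasing, every descendant of $v$ is labeled smaller than $v$ while every proper ancestor of $v$ is labeled larger. The recursive definition of symmetric order gives the following dichotomy: if $v$ has a left child, then the predecessor $\sigma_{i-1}$ is the rightmost vertex of $v$'s left subtree, so $\sigma_{i-1}<\sigma_i$; and if $v$ has no left child, then $\sigma_{i-1}$ is the nearest ancestor of $v$ whose right subtree contains $v$, so $\sigma_{i-1}>\sigma_i$. A symmetric analysis, via the right subtree of $v$, applies to $\sigma_{i+1}$. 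Consequently both neighbors of $\sigma_i$ exceed $\sigma_i$ precisely when $v$ has neither a left nor a right child, that is, precisely when $v$ is a leaf; this matches leaves of $\tau$ to valleys of $\sigma$ bijectively.

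Putting the pieces together, $\sigma\mapsto S^{-1}(\sigma)$ restricts to a bijection between the permutations in $s^{-1}(\pi)$ having exactly $p$ valleys and the trees $\tau\in\mathcal{D}_n$ with $P(\tau)=\pi$ having exactly $p$ leaves, and Example~\ref{Exam5} evaluates the cardinality of the latter set as the displayed formula. The only place that genuinely requires attention in a careful write-up is the boundary of the symmetric order: positions $1$ and $n$ of $\sigma$ are excluded from being valleys, so the dichotomy above must be checked at the first and last entries (where one of the neighbors is missing) so that these endpoints are correctly accounted for on both sides of the correspondence. Apart from this boundary verification, the corollary is an immediate translation of Example~\ref{Exam5}.
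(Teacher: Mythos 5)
Your route is the same as the paper's: the paper also obtains this corollary by composing Example \ref{Exam5} with the symmetric-order bijection $S$ and the identity $s(\sigma)=P(S^{-1}(\sigma))$, asserting in a single sentence that leaves of $S^{-1}(\sigma)$ correspond to valleys of $\sigma$. Your dichotomy analysis at an interior position of the symmetric order is correct. The genuine problem is the boundary check that you explicitly defer and presume will come out right: it does not. For $n\ge 2$, the vertex occupying position $1$ of $\sigma=S(\tau)$ is the leftmost vertex of $\tau$; it never has a left child, and it is a leaf if and only if $\sigma_1<\sigma_2$. Symmetrically, the vertex at position $n$ is a leaf if and only if $\sigma_{n-1}>\sigma_n$. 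Under the stated definition of valley, which requires $i\in\{2,\ldots,n-1\}$, neither endpoint can be a valley, so in general
\[
\#\{\text{leaves of } S^{-1}(\sigma)\}\;=\;\#\{\text{valleys of }\sigma\}\;+\;[\sigma_1<\sigma_2]\;+\;[\sigma_{n-1}>\sigma_n],
\]
where the brackets are Iverson brackets. The two statistics are therefore not carried to one another by $S$. A concrete failure: for $\pi=12$ we have $s^{-1}(\pi)=\{12,21\}$, and both preimages have $0$ valleys, yet the trees $S^{-1}(12)$ and $S^{-1}(21)$ each have exactly one leaf, and the displayed formula returns $2$ at $p=1$ and $0$ at $p=0$.

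So the translation step is not a routine verification but the place where the argument breaks. It can be repaired only by adjusting the valley convention: pad with sentinels $\sigma_0=\sigma_{n+1}=+\infty$ and count all indices $i\in[n]$ with $\sigma_i<\min\{\sigma_{i-1},\sigma_{i+1}\}$; with that convention leaves do biject with valleys and the formula of Example \ref{Exam5} transfers verbatim. With the unpadded definition the corollary as literally stated is off at the boundary. To be fair, this defect is inherited from the paper's own one-sentence justification preceding the corollary; but your proposal isolates the endpoints as the one delicate point and then asserts that they ``are correctly accounted for on both sides of the correspondence,'' which is precisely the claim that fails and which a careful write-up must instead correct.
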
    

Corollary \ref{Cor3} provides a method for calculating a refined enumeration of the preimages of any permutation $\pi$ under $s$. The reader may very well ask ``what about the \emph{total} number of preimages of $\pi$ under $s$?" West originally asked about preimages of permutations under $s$, and he defined the \emph{fertility} of a permutation to be the total number of these preimages \cite{West90}. Subsequently, Bousquet-M\'elou gave a method for determining whether or not the feritility of any permutation is $0$ (that is, whether or not a permutation is in the image of $s$), and she stated that it would be interesting to find a method for computing the fertility of any given permutation \cite{Bousquet00}. Fortunately, we have such a method (reliant on the construction of the set $\mathcal H_{\{0,2\}}(\pi)$). 

\begin{theorem}\label{Thm7}
If $\pi\in S_n$ is a permutation with exactly $k$ descents, then the number of permutations $\sigma\in S_n$ such that $s(\sigma)=\pi$ is given by \[\sum_{\mathscr H\in\mathcal H_{\{0,2\}}(\pi)}\prod_{t=0}^kC_{\widehat{q}_t(\mathscr H)}.\]
\end{theorem}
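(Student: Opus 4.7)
The plan is to obtain Theorem \ref{Thm7} as a direct specialization of Theorem \ref{Thm4} rather than by summing the refined identity of Corollary \ref{Cor2} over the leaf parameter $p$. Since $s=P\circ S^{-1}$ and $S\colon\mathcal D_n\to S_n$ is a bijection, the fertility of $\pi$ equals the number of decreasing binary plane trees with postorder $\pi$, which by Theorem \ref{Thm4} with $d=2$ equals
\[
\sum_{\mathscr H\in\mathcal H_{\{0,1,2\}}(\pi)}\left(\prod_{j=1}^n\binom{2}{w_{j}(\mathscr H)}\right)\prod_{t=0}^{\vert \mathscr H\vert}E_2(q_t(\mathscr H)).
\]
I will then simplify this expression using three observations, each one already implicit in the preceding section.

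First, property (c) of Definition \ref{Def1} forces every $(j,\pi_j)\in NE(\mathscr H)$ to be the northeast endpoint of at least two distinct hooks, since $(j-1,\pi_{j-1})$ satisfies $\pi_{j-1}<\pi_j$ and so cannot sit at a descent. Hence $\mathcal H_{\{0,1,2\}}(\pi)=\mathcal H_{\{0,2\}}(\pi)$ and each $w_j(\mathscr H)$ lies in $\{0,2\}$, so the binomial product equals $1$. Second, the recursion $A_2(x)=1+xA_2(x)^2$ from the introduction gives $E_2(r)=C_r$. Third, Remark \ref{Rem1} says that whenever color $c_t$ is used on a point of $NE(\mathscr H)$ we have $q_t(\mathscr H)=1$, and since $C_1=1$ we may discard those factors, collapsing the product to $\prod_{t=0}^{\widehat{\vert\mathscr H\vert}}C_{\widehat q_t(\mathscr H)}$.

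Finally I will pin down $\widehat{\vert\mathscr H\vert}$. As in the proof of Corollary \ref{Cor1}, properties (b) and (c) of Definition \ref{Def1} establish a bijection between descents of $\pi$ and elements of $NE(\mathscr H)$: each descent is the southwest endpoint of a unique hook, and each NE point receives exactly one descent-hook since it carries only two hooks in total. Therefore $\vert NE(\mathscr H)\vert=k$ and $\vert\mathscr H\vert=2k$, giving $\widehat{\vert\mathscr H\vert}=\vert\mathscr H\vert-\vert NE(\mathscr H)\vert=k$, which produces the claimed formula. The argument presents no substantive obstacle; all of the combinatorial content is already bundled into Theorem \ref{Thm4} and Remark \ref{Rem1}, and the only point demanding care is the collapse of the index set from size $\vert\mathscr H\vert+1$ down to $k+1$, which is driven entirely by the $C_1=1$ reduction.
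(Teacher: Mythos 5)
Your proposal is correct and follows essentially the same route as the paper: set $d=2$ in Theorem \ref{Thm4}, note that $\mathcal H_{[2]\cup\{0\}}(\pi)=\mathcal H_{\{0,2\}}(\pi)$ so the binomial product is $1$, use $E_2(r)=C_r$, and discard the $C_1=1$ factors coming from Remark \ref{Rem1} to collapse the product to $\prod_{t=0}^{k}C_{\widehat q_t(\mathscr H)}$. Your extra verification that $\vert NE(\mathscr H)\vert=k$ and $\widehat{\vert\mathscr H\vert}=k$ is the same counting the paper carries out in the proof of Corollary \ref{Cor1}, so nothing is missing.
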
 
\begin{proof}
Set $d=2$ in Theorem \ref{Thm4}, and use the fact that $\mathcal H_{[2]\cup\{0\}}(\pi)=\mathcal H_{\{0,2\}}(\pi)$. By definition, $w_{j}(\mathscr H)\in \{0,2\}$ for all $j\in [n]$ and $\mathscr H\in\mathcal H_{\{0,2\}}(\pi)$. Therefore, the first product in the summation in Theorem \ref{Thm4} simplifies to $1$. The second product becomes $\displaystyle{\prod_{t=0}^{\vert\mathscr H\vert}C_{q_t(\mathscr H)}}$. This is because $E_2(q_t(\mathscr H))$, the number of (unlabeled) binary plane trees with $q_t(\mathscr H)$ vertices, is the Catalan number $C_{q_t(\mathscr H)}$. Finally, note that if $c_t$ is a color used to color an element of $NE(\mathscr H)$ in the colored diagram of $\pi$ induced by $\mathscr H$, then $C_{q_t(\mathscr H)}=C_1=1$ by Remark \ref{Rem1}. Therefore, the product simplifies further to $\displaystyle{\prod_{t=0}^kC_{\widehat{q}_t(\mathscr H)}}$.  
\end{proof}

As a final application, we count the number of preimages of a permutation under $s$ which have a fixed number of descents. In the following theorem, recall that the Narayana number $N(a,b)$ is defined by $N(a,b)=\frac{1}{a}{a\choose b}{a\choose b-1}$.  

\begin{theorem}\label{Thm8}
If $\pi\in S_n$ has exactly $k$ descents and $m$ is a nonnegative integer, then the number of permutations $\sigma\in S_n$ which have exactly $m$ descents and satisfy $s(\sigma)=\pi$ is given by \[\sum_{\substack{\mathscr H\in\mathcal H_{\{0,2\}}(\pi)\\ j_0+j_1+\cdots+j_k=m-k}}\prod_{t=0}^kN(\widehat{q}_t(\mathscr H),j_t+1),\] where the numbers $j_0,j_1,\ldots,j_k$ in the sum are nonnegative integers that sum to $m-k$. 
\end{theorem}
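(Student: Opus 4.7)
The plan is to convert the descent-refinement of the preimage count under $s$ into a question about decreasing binary plane trees and then refine the bijection of Theorem \ref{Thm4} (at $d=2$) by tracking the right-subtree statistic. Using $s=P\circ S^{-1}$, preimages of $\pi$ under $s$ correspond through $S$ to decreasing binary plane trees $\tau$ with $P(\tau)=\pi$, so the first step is to translate descents of $\sigma=S(\tau)$ into a tree statistic of $\tau$. Reading $\tau$ in symmetric order, the vertex immediately following $v$ (when one exists) is the leftmost descendant reached by walking along left edges from the right child of $v$ when $v$ has a right child, and otherwise is the nearest strict ancestor of $v$ whose left subtree contains $v$. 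In a decreasing tree the former is smaller than $v$ (a descent) while the latter is larger (no descent), so $\mathrm{des}(\sigma)$ equals the number of vertices of $\tau$ that have a right child.

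Next I would invoke the bijective content of Theorem \ref{Thm4} at $d=2$: each decreasing binary plane tree $\tau$ with $P(\tau)=\pi$ corresponds uniquely to a pair $(\mathscr H,\mathscr T)$ with $\mathscr H\in\mathcal H_{\{0,2\}}(\pi)$ and $\mathscr T=(T_0,\ldots,T_{|\mathscr H|})$, where each $T_t$ is a decreasing binary tree on $Q_t(\mathscr H)$ whose postorder is increasing. Every vertex corresponding to a point of $NE(\mathscr H)$ has both children present in $\tau$ (its $w_j=2$) and hence contributes $|NE(\mathscr H)|=k$ to the right-child-having count. The paragraph preceding Proposition \ref{Prop1} exhibits a position-preserving embedding of each $T_t$ in $\tau$, so any $v\in Q_t(\mathscr H)\setminus NE(\mathscr H)$ has a right child in $\tau$ if and only if it has one in $T_t$. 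Writing $j_t$ for the number of right-child-having vertices of $T_t$, the total right-child-having count in $\tau$ is $k+\sum_{t=0}^{k}j_t$ (singleton color classes contribute $j_t=0$ automatically, and $\widehat{|\mathscr H|}=k$), so the constraint $\mathrm{des}(\sigma)=m$ becomes $j_0+j_1+\cdots+j_k=m-k$.

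Finally, for fixed $\widehat q_t(\mathscr H)$ and $j_t$, the admissible trees $T_t$ are precisely the unlabeled binary plane tree shapes on $\widehat q_t(\mathscr H)$ vertices with exactly $j_t$ right-child-having vertices (the labeling being forced by the increasing-postorder requirement), and this count equals the Narayana number $N(\widehat q_t(\mathscr H),j_t+1)$; this is classical and follows, e.g., from the standard bijection between binary plane trees and Dyck paths that sends right-child-having vertices to peaks. Multiplying over $t\in\{0,1,\ldots,k\}$ and summing over $\mathscr H\in\mathcal H_{\{0,2\}}(\pi)$ and over compositions $j_0+\cdots+j_k=m-k$ yields the stated formula. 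A sanity check: summing over $m$ recovers Theorem \ref{Thm7}, using the identity $\sum_{j\ge 0}N(r,j+1)=C_r$.

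The main obstacle is establishing the symmetric-order/right-child dictionary together with the verification that child positions are preserved under the embedding of each $T_t$ into $\tau$; both are direct consequences of the recursive definition of symmetric order and of the structure of the bijection underlying Theorems \ref{Thm1} and \ref{Thm4}, so once they are in hand the proof reduces to bookkeeping and the Narayana count.
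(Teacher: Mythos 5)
Your proposal is correct and follows essentially the same route as the paper: translate descents of $\sigma$ into right edges (equivalently, right-child-having vertices) of $S^{-1}(\sigma)$, observe that each point of $NE(\mathscr H)$ forces $k$ such edges while the singleton color classes contribute none, and count the remaining shapes $T_{i_t}$ by the Narayana numbers. Your extra details (the symmetric-order successor argument and the consistency check against Theorem \ref{Thm7}) are correct but not a different method.
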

\begin{proof}
The descents of a permutation $\sigma$ are in one-to-one correspondence with the right edges of the decreasing binary plane tree $S^{-1}(\sigma)$. Therefore, we are really counting the number of decreasing binary plane trees that have exactly $m$ right edges and that have postorder $\pi$. 

Choose some $\mathscr H\in\mathcal H_{\{0,2\}}(\pi)$. Observe that if $(j,\pi_j)\in NE(\mathscr H)$, then $w_{j}(\mathscr H)=2$. This means that for any collection of trees $\mathscr T=(T_0,T_1,\ldots,T_{2k})$, $\pi_j$ will have two children in $g(\mathscr H,\mathscr T)$. In particular, $\pi_j$ will have a right child in $g(\mathscr H,\mathscr T)$. Consequently, $g(\mathscr H,\mathscr T)$ will automatically have $k$ right edges that attach the $k$ elements of the set $\{\pi_j\colon (j,\pi_j)\in NE(\mathscr H)\}$ to their right children. 

We now need to choose a collection of trees $\mathscr T=(T_0,T_1,\ldots,T_{2k})$ such that the total number of right edges in all of the trees $T_0,T_1,\ldots,T_{2k}$ is $m-k$. If $c_t$ is a color used to color a point in $NE(\mathscr H)$ in the colored diagram of $\pi$ induced by $\mathscr H$, then the number of vertices of $T_t$ is $q_t(\mathscr H)=1$ by Remark \ref{Rem1}. Such a tree has no right edges, and there is only one way to choose each such tree. Therefore, we are left to choose the trees $T_i$ for $i\in\Theta(\mathscr H)$. Let $\Theta(\mathscr H)=\{i_0,i_1,\ldots,i_k\}$, where $i_0<i_1<\ldots<i_k$. Choose a collection of nonnegative integers $j_0,j_1,\ldots,j_k$ with $j_0+j_1+\cdots+j_k=m-k$. It is well known that $N(a,b+1)$ is the number of unlabeled binary plane trees with $a$ vertices that have exactly $b$ right edges. Therefore, the number of ways to choose the trees $T_{i_0},T_{i_1},\ldots,T_{i_k}$ so that each tree $T_{i_t}$ has exactly $j_t$ right edges is $\displaystyle{\prod_{t=0}^kN(\widehat{q}_t(\mathscr H),j_t+1)}$. The result now follows by summing over all possible $j_0,j_1,\ldots,j_k$ and all possible $\mathscr H\in\mathcal H_{\{0,2\}}(\pi)$. 
\end{proof}

\section{Concluding Remarks}
The formulas given in this paper include sums over sets of the form $\mathcal H_S(\pi)$, where $0\in S\subseteq\mathbb N$ and $\pi\in S_n$. For this reason, it would be interesting to have a method for efficiently generating all of the valid hook configurations in the set $\mathcal H_S(\pi)$ if we are given $S$ and $\pi$. Alternatively, suppose we fix $S$ and let $\Lambda$ be a specific family of permutations (such as the family of layered permutations or the family of involutions). We would be interested in calculating (or at least estimating) the number of hooks in the set $\mathcal H_S(\pi)$ for each $\pi\in\Lambda$.

\section{Acknowledgments}
The author would like to thank Mikl\'os B\'ona for very helpful advise concerning the submission and organization of this paper. The author would also like to thank the anonymous referees for their useful suggestions.


\begin{thebibliography}{9}
\bibitem{Albert14}
M. Albert and M. Bouvel, Operators of equivalent sorting power and related Wilf-equivalences. DMTCS proc. AS, (2013), 701--712.

\bibitem{Bona}
B\'ona, Mikl\'os, Combinatorics of permutations. Second Edition. CRC Press, 2012. 

\bibitem{Bona02}
B\'ona, Mikl\'os, Symmetry and unimodality in $t$-stack sortable permutations. Journal of Combinatorial Theory, Series A 98.1 (2002): 201--209.

\bibitem{Bona03}
B\'ona, Mikl\'os, A survey of stack-sorting disciplines, Electron. J. Combin. 9.2 (2003): 16. 

\bibitem{Bousquet00}
Bousquet-Melou, Mireille, Sorted and/or sortable permutations, Discrete Math., 225
(2000), no. 1-3, 25–-50.

\bibitem{OEIS} 
\emph{The On-Line Encyclopedia of Integer Sequences}, published electronically at http://oeis.org, 2010.


\bibitem{Stanley}
Stanley, Richard, \emph{Enumerative Combinatorics, Volume 1,} Second Edition. Cambridge University Press, Cambridge UK, 2012.
   

\bibitem{West90} 
West, Julian, Permutations with restricted subsequences and stack-sortable permutations, Ph.D. Thesis, MIT,
1990. 

\end{thebibliography}
\end{document}